\newcommand{\assign}{:=}
\newcommand{\mathd}{\mathrm{d}}
\newcommand{\nin}{\not\in}
\newcommand{\of}{:}
\newcommand{\precpreceq}{\preceq\!\!\preceq}
\newcommand{\suchthat}{:}
\newcommand{\tmaffiliation}[1]{\\ #1}
\newcommand{\tmem}[1]{{\em #1\/}}
\newcommand{\tmemail}[1]{\\ \textit{Email:} \texttt{#1}}
\newcommand{\tmop}[1]{\ensuremath{\operatorname{#1}}}
\newcommand{\tmstrong}[1]{\textbf{#1}}
\newcommand{\tmtextit}[1]{\text{{\itshape{#1}}}}
\newenvironment{enumeratealpha}{\begin{enumerate}[a{\textup{)}}] }{\end{enumerate}}
\newenvironment{itemizeminus}{\begin{itemize} }{\end{itemize}}
\newenvironment{proof}{\noindent\textbf{Proof\ }}{\hspace*{\fill}$\Box$\medskip}
\newcounter{nnacknowledgments}
\newtheorem{acknowledgments*}[nnacknowledgments]{Acknowledgments}}
\newtheorem{corollary}{Corollary}[section]
\newtheorem{definition}{Definition}[section]
{\theorembodyfont{\rmfamily}\newtheorem{example}{Example}[section]}
\newtheorem{lemma}{Lemma}[section]
\newtheorem{proposition}{Proposition}[section]
{\theorembodyfont{\rmfamily}\newtheorem{remark}{Remark}[section]}
\newtheorem{theorem}{Theorem}[section]
\newtheorem{maintheorem}{Theorem}
\begin{document}

\title{Formal conjugacy and asymptotic differential algebra}

\author{
  Vincent Bagayoko
  \tmaffiliation{IMJ-PRG (Paris)}
  \tmemail{bagayoko@imj-prg.fr}
}

\maketitle

\begin{abstract}
  We study conjugacy of formal derivations on fields of generalised power
  series in characteristic $0$ and dimension $1$. Casting the problem of
  Poincar{\'e} resonance in terms of asymptotic differential algebra, we give
  conditions for conjugacy of parabolic flat $\log$-$\exp$ transseries, flat
  grid-based transseries, logarithmic transseries, power series with exponents
  and coefficients in an ordered field, and formal Puiseux series.
\end{abstract}

\section*{Introduction}

Conjugating and normalising local analytic diffeomorphisms around fixed points
is a classical method for classifying dynamical systems. Its formal version,
where the conjugating element is a possibly divergent formal power series, is
usually easier to tackle, as it is devoid of convergence issues (see
{\cite{Ec:compensators}}). Yet normalising formal power series may be
difficult because of the phenomenon of resonance, as first studied by
Poincar{\'e} {\cite{Poincar�:vol1}} and Dulac {\cite{Dulac:phd}}. Resonance
introduces non-convexity into the conjugacy problem: given three series $f$,
$g$ and $h$ where $h$ is closer to $f$ than $g$ from a valuative standpoint,
it may be that $f$ and $g$ are conjugate whereas $f$ and $h$ are not (see
\Cref{subsection-resonnance}).

Our main motivation for this paper comes from our interest in first-order
properties of certain valued groups {\cite{Bag:gog,Bag:c-val}}. From the model
theoretic standpoint, the existence of resonance means that the geometry of
definable sets in those structures is too complicated to study. Thus finding
contexts in which conjugacy is resonance-free is crucial. We hope to convince
the reader that as far as formal normalisation of local objects is concerned,
the three following notions, belonging to seemingly disparate domains, are
strongly connected:
\begin{itemizeminus}
  \item non-resonance, as a linear algebraic condition for linearisation of
  vector fields {\cite{Ec:compensators}},
  
  \item convexity of conjugacy, as a case of tameness of definable sets in
  valued groups {\cite{Bag:c-val}},
  
  \item asymptotic integration, as a closure property for valued differential
  fields {\cite{Rosli80}}.
\end{itemizeminus}
In connection {\cite{GaKaSpei:Ilya,PeResRoSer:linear}} with Dulac's problem
(see {\cite{Du1923,Ilya85,Ec90}}), the dynamics of Poincar{\'e} first-return
maps, and the analysis of limit cycles of vector fields {\tmem{via}} Dulac
series, there has been interest recently
{\cite{PeResRoSer:normal,Peran:parabolic,Peran:hyperbolic}} in normalising
formal series which may involve formal exponentials and logarithms of the
infinite variable $x$. This is for instance notable in {\'E}calle's method of
linearisation by compensators {\cite{Ec:compensators}}. For these more general
questions, a natural domain of investigation is the field of
logarithmic-exponential transseries {\cite{DG87,Ec92,vdDMM01}}. It is known
how to normalise purely logarithmic transseries {\cite{Gre:phd}} of the form
$\lambda x + o (x)$, in the hyperbolic case {\cite{PeResRoSer:normal}}, i.e.
when $\lambda \neq 1$, and in the parabolic case {\cite{Peran:parabolic}},
when $\lambda = 1$. There is ongoing work on the hyperbolic and parabolic
cases in the more general setting of H-fields {\cite{AvdD02,AvdD03}} equipped
with composition laws (see {\cite[Section~4.1]{Bag:gog}}).

In this paper, we focus on the conjugacy problem for parabolic series in
differential valued fields {\cite{Rosli80}}. These include for instance formal
Laurent or Puiseux series, $\log$-$\exp$ transseries or grid-based transseries
{\cite{vdH:ln}}, logarithmic transseries {\cite{Gre:phd}}, or $\omega$-series {\cite{BM19}}, and complexifications thereof. This
choice is not fortuitous but motivated by the connections between the setting
of (ordered) asymptotic differential algebra and that of (ordered) valued
groups (see {\cite[Remark~7.27]{Bag:c-val}}). One of the difficulties of
solving conjugacy equations for formal series endowed with a composition law
$\circ$ and a derivation $\partial$ is that this requires a good understanding
of the interaction of the composition law with the valuation. On this path,
one is confronted with intricate and computationally heavy problems involving
Taylor expansions of arbitrarily high orders, monotonicity of the composition
law, and mean value inequalities. It is not the least of hindrances that such
properties of $\circ$ and $\partial$ may not have been established for the
given algebra of formal series.

We circumvent these issues by leveraging the Lie-type correspondence, given by
a formal exponential map $\exp$, between near-identity substitutions $f
\mapsto f \circ (x + \delta)$ and contracting derivations $f \mapsto g
\partial (f)$ on algebras of formal series. We showed {\cite{BKKMP:strong}}
that a fraction of the theory of Lie groups applies to such algebras. We focus
on contracting derivations on fields of generalised series, i.e. derivations which piecewise increase the valuation of non-trivial elements. On
the side of vector fields, contractiveness can be related to nilpotency (see {\cite{Martinet}}).

Given a direct limit $\mathbb{S}$ of fields of Hahn series with its natural
valuation $v$ and a derivation $\partial \of f \mapsto f'$ on $\mathbb{S}$
that is compatible with the structure of direct limit of fields of series (see
\Cref{def-regular}), we consider a group $(\tmop{Cont} (\partial), \ast)$
introduced in {\cite{Bag:c-val}} of contracting derivations on $\mathbb{S}$.
The group law $\ast$ is a formal Baker-Campbell-Hausdorff operation (see
{\cite{Mal'cev:Lie,Lazard:Lie,Serre:Lie,CiGraVa:Lie}}). This group also has a
structure of Lie algebra and can be seen as a linearisation of its ``Lie
group'' $\exp (\tmop{Cont} (\partial))$. The latter is a group, under
functional composition, of substitutions. In $(\tmop{Cont} (\partial), \ast)$,
finding approximate solutions of conjugacy equations reduces to finding
approximate solutions of linear differential equations of order $1$ (see
\Cref{lem-approximate-conjugacy}). Using spherical completeness arguments, one
can obtain exact solutions by transfinitely iterating the approximation method
(see \Cref{lem-exact}). This composition-free framework allows us to easily
understand obstructions to conjugacy, and to cast resonance merely as a
property of asymptotic differential algebra, i.e. as a property of the valued
differential field $(\mathbb{S}, v, \partial)$. This gives a simple connection
between features of the asymptotic couple {\cite[Section~9.1]{vdH:mt}} of
$\mathbb{S}$, in particular asymptotic integration {\cite[p 327]{vdH:mt}}, and
the existence of resonance for conjugacy equations and normal forms. Say that
$\mathbb{S}$ has regular asymptotic integration if asymptotic integration on
$\mathbb{S}$ is compatible with its structure of direct limit (see
\Cref{def-semi-regular}). We prove:

\begin{maintheorem}
  \label{th-1}{\tmem{[\Cref{th-main}]}} Suppose that $\mathbb{S}$ has regular
  asymptotic integration. Let $f, g \in \tmop{Cont} (\partial) \setminus \{ 0
  \}$. Then $f$ and $g$ are conjugate in $\tmop{Cont} (\partial)$ if and only
  if $v (f - g) > v (g)$ and $v (f - g) > v \left( \left( h / gh' \right)'
  \right)$ for all $h \in \mathbb{S}^{\times}$ with $v (h) > 0$.
\end{maintheorem}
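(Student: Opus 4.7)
The strategy is to exploit the Lie-theoretic framework to reduce the nonlinear conjugacy equation $\phi \ast f \ast (-\phi) = g$ in $(\operatorname{Cont}(\partial), \ast)$ to a transfinite sequence of first-order linear problems. Expanding via Baker--Campbell--Hausdorff, the leading obstruction is the bracket $[\phi, f] = \phi f' - f \phi'$, so at first order the conjugacy equation reads $\phi f' - f \phi' = g - f$. Dividing by $-f^2$ rewrites this as $(\phi/f)' = (f-g)/f^2$, a linear equation whose solvability in $\mathbb{S}$ is governed by asymptotic integration; the expression $(h/gh')'$ appearing in the statement is precisely the shape taken by the residual of such an equation when parametrised by an auxiliary infinitesimal $h$.

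For the forward direction, assume $\phi \in \operatorname{Cont}(\partial)$ conjugates $f$ to $g$. Since $\phi$ is contracting, $v(\phi) > 0$, and the BCH expansion yields $g - f = [\phi,f] + \tfrac{1}{2}[\phi,[\phi,f]] + \cdots$, whose dominant term has valuation $> v(f) = v(g)$, establishing $v(f-g) > v(g)$. For the second condition, given $h \in \mathbb{S}^{\times}$ with $v(h) > 0$, I would pair the equation $\phi f' - f \phi' = (g - f) + (\text{higher order})$ with the test element $h/(gh')$ via an integration-by-parts / differentiation identity, showing that $f - g$ must then be strictly smaller in valuation than $(h/gh')'$; otherwise a resonance obstruction would prevent $\phi$ from existing.

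For the backward direction, the plan is to iterate \Cref{lem-approximate-conjugacy}. Under the two valuation hypotheses, the first guarantees that the linearised problem is well-posed (the right-hand side lies in the relevant contracting ideal), while the second, combined with regular asymptotic integration, guarantees that an antiderivative exists in $\mathbb{S}$ compatibly with the direct-limit structure. Applying the lemma produces $\phi_0 \in \operatorname{Cont}(\partial)$ such that $g_1 := \phi_0 \ast f \ast (-\phi_0)$ satisfies $v(g_1 - g) > v(f - g)$, and a verification step shows that the two conditions persist between $g_1$ and $g$. Transfinite iteration along a well-ordered chain, closed under suprema via \Cref{lem-exact} and the spherical completeness inherent to Hahn series, then produces a composite conjugator $\phi \in \operatorname{Cont}(\partial)$ realising $\phi \ast f \ast (-\phi) = g$.

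The main obstacle is the stability of the resonance-free condition along the iteration: I have to show that if $v(f-g) > v((h/gh')')$ for every $h$ with $v(h)>0$, then after one approximate step the same inequality holds for the new residual with valuation parameter replaced accordingly, and uniformly in $h$. This is where regular asymptotic integration (\Cref{def-semi-regular}) is essential, as it both furnishes the antiderivative chosen at each step within the correct level of the direct limit and ensures that the valuation gain is uniform over the family of test elements $h$, so that the transfinite recursion terminates in an exact solution rather than an accumulation of ever-smaller but non-vanishing residuals.
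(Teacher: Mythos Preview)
Your overall architecture matches the paper's: linearise via BCH, reduce the leading term to the first-order problem $(\phi/g)' \sim (f-g)/g^2$, then iterate transfinitely using \Cref{lem-approximate-conjugacy} and \Cref{lem-exact}. But two of your steps are either not arguments or misattribute what the hypotheses do.

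In the forward direction, your treatment of the second condition is not a proof: ``pair with the test element $h/(gh')$ via an integration-by-parts identity'' names neither the identity nor the mechanism. The paper's argument is much more direct. An exact conjugator is in particular an asymptotic one, so \Cref{lem-approximate-conjugacy} produces an $A$ with $A' \sim (f-g)/g^2$ and $gA \in \operatorname{Cont}(\partial)$; by \Cref{lem-cont-cond} the latter reads $v(A) + v(g) + \Psi > 0$, and then the strict monotonicity of $\gamma \mapsto \gamma'$ on $\Gamma \setminus \{0\}$ (\Cref{lem-monotone}) gives $\mu = v(A') > (-v(g)-\Psi)'$ at once. That set is exactly $\{v((h/(gh'))') : v(h) > 0\}$, so the condition drops out with no pairing argument. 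Also, ``since $\phi$ is contracting, $v(\phi) > 0$'' is false: membership in $\operatorname{Cont}(\partial)$ means $v(\phi) + \Psi > 0$, not $v(\phi) > 0$ (e.g.\ in flat transseries $\phi = \sqrt{x}$ lies in $\operatorname{Cont}(\partial)$ with $v(\phi) < 0$); fortunately your argument for $v(f-g) > v(g)$ only really uses \Cref{lem-Lie-cont}.

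In the backward direction you invert the roles of the hypotheses. Regular asymptotic integration already guarantees that $(f-g)/g^2$ has an asymptotic integral $A$ (there is no pseudo-gap); the second valuation hypothesis is what ensures, via \Cref{lem-monotone} again, that $v(A) + v(g) + \Psi > 0$, i.e.\ that $gA$ actually lands in $\operatorname{Cont}(\partial)$. And the ``main obstacle'' you flag---stability of the resonance-free condition along the iteration, uniformly in $h$---is automatic, not delicate: the residual valuations $v(\varphi_\eta \ast g \ast (-\varphi_\eta) - f)$ are strictly increasing, so once $\mu > (-v(g)-\Psi)'$ holds at the start it holds forever, and with no pseudo-gap case~(b) of \Cref{lem-exact} never occurs. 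The paper packages this one-line observation as \Cref{prop-with-Poincare}; no uniformity argument over test elements is needed.
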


Another benefit of our method is that it is independent of the field of
scalars $C$, in that the results are preserved under extensions of scalars
(see \Cref{rem-extension-of-scalars}). Using the Lie-type correspondence, we
obtain a more classical reformulation of the conjugacy problem:

\begin{maintheorem}
  {\tmem{[\Cref{th-main-standard}]}} Suppose that $\mathbb{S}$ has regular
  asymptotic integration. Let $f, g \in \tmop{Cont} (\partial) \setminus \{ 0
  \}$ such that $v (f - g) > v (f)$ and and $v (f - g) > v \left( \left( h /
  gh' \right)' \right)$ for all $h \in \mathbb{S}^{\times}$ with $v (h) > 0$.
  Then the derivations $f \partial \assign h \mapsto f \partial(h)$ and $g \partial\assign h \mapsto g \partial(h)$ are conjugate over
  $\tmop{Aut} (\mathbb{S})$, i.e. there is a $\sigma = \exp (h \partial) \in
  \tmop{Aut} (\mathbb{S})$ such that~$\sigma \circ (g \partial) \circ
  \sigma^{\tmop{inv}} = f \partial$.
\end{maintheorem}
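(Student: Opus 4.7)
The plan is to deduce this from \Cref{th-1} by transporting the algebraic conjugacy inside $(\tmop{Cont} (\partial), \ast)$ through the Lie-type correspondence of \cite{BKKMP:strong} between near-identity substitutions and contracting derivations. First I would verify that the hypotheses of \Cref{th-1} apply: writing $g = f + (g - f)$, the inequality $v (f - g) > v (f)$ together with the ultrametric law forces $v (g) = v (f)$, so in particular $v (f - g) > v (g)$; the second standing assumption is already in the form required by \Cref{th-1}. That theorem then furnishes an element $h \in \tmop{Cont} (\partial)$ whose $\ast$-conjugation action sends $g$ to $f$, i.e., denoting the $\ast$-inverse of $h$ by $\bar{h}$, one has $h \ast g \ast \bar{h} = f$ in the group $(\tmop{Cont} (\partial), \ast)$.

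Next I would push this identity through the exponential map. The framework of \cite{BKKMP:strong} provides a formal exponential $\exp$ realising $(\tmop{Cont} (\partial), \ast)$ as the subgroup $\exp (\tmop{Cont} (\partial)) \subseteq \tmop{Aut} (\mathbb{S})$ of near-identity substitutions, via the identity $\exp ((h_1 \ast h_2) \partial) = \exp (h_1 \partial) \circ \exp (h_2 \partial)$. A Baker--Campbell--Hausdorff-type computation in this setting then yields that the adjoint action of $\sigma \assign \exp (h \partial)$ on derivations is given by $\sigma \circ (g \partial) \circ \sigma^{\tmop{inv}} = (h \ast g \ast \bar{h}) \partial$. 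Combined with the conjugacy relation from the previous paragraph, this reads $\sigma \circ (g \partial) \circ \sigma^{\tmop{inv}} = f \partial$, which is exactly the desired conclusion.

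The main obstacle, largely absorbed into the cited correspondence, is to justify the formal manipulations above: that $\exp (h \partial)$ is strongly summable and defines a genuine element of $\tmop{Aut} (\mathbb{S})$, that $\exp$ intertwines $\ast$ with functional composition on its image, and that the adjoint identity relating $\sigma$-conjugation of $g \partial$ to $\ast$-conjugation of $g$ by $h$ holds as an equality of derivations on $\mathbb{S}$. Once these ingredients are granted, the passage from the intrinsic group-theoretic statement \Cref{th-1} to the externally formulated conjugacy of derivations stated here is essentially bookkeeping.
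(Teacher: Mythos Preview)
Your proposal is correct and follows essentially the same route as the paper: invoke \Cref{th-main} to obtain a $\ast$-conjugating element $h$, then transport through the exponential isomorphism $\exp \of (\tmop{Cont}(\partial),\ast) \to 1\text{-}\tmop{Aut}^{\mathcal S}(\mathbb S)$ to conclude. The only notable difference is in how the adjoint identity $\sigma \circ (g\partial) \circ \sigma^{\tmop{inv}} = (h \ast g \ast (-h))\partial$ is justified: you appeal to a ``BCH-type computation'', whereas the paper argues more explicitly by first applying $\exp$ to the $\ast$-identity to get $\sigma \circ \exp(g\partial) \circ \sigma^{\tmop{inv}} = \exp(f\partial)$, and then observing that conjugation by $\sigma$ is a strongly linear algebra automorphism of $\tmop{Der}^{\mathcal S}_{\prec}(\mathbb S)$ (citing {\cite[Proposition~1.28]{BKKMP:strong}}), so it commutes with the formal $\log$, yielding $\sigma \circ (g\partial) \circ \sigma^{\tmop{inv}} = f\partial$ directly.
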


In certain cases, the group of automorphisms $\exp (\tmop{Cont} (\partial))$
is isomorphic to a well-identified group of series under composition. Let
$\mathbb{S}$ be the field of transseries whose transmonomials $\mathfrak{m}$
satisfy $v (\mathfrak{m}' /\mathfrak{m}) + v (x) \geqslant 0$. We identify
(\Cref{prop-subsystems}) the group $\exp (\tmop{Cont} (\partial))$ for all
direct limits of subsystems (\Cref{def-subsystem}) of the direct system which
defines $\mathbb{S}$. Combining this with Theorem~\ref{th-1}, we obtain:

\begin{maintheorem}
  {\tmem{[\Cref{th-transseries}]}} For all $\delta, \varepsilon \in
  \mathbb{S}$ with $v (\delta), v (\varepsilon) > v (x)$, the series $x +
  \delta$ and $x + \varepsilon$ are conjugate in $\{ x + \rho \suchthat \iota
  \in \mathbb{S} \wedge v (\rho) > v (x) \}$ if and only if $v (\varepsilon -
  \delta) > v (\delta - x \delta')$.
\end{maintheorem}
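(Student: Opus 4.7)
The plan is to deduce this statement by combining \Cref{prop-subsystems} with \Cref{th-main}, and then to simplify the resulting abstract conjugacy criterion into the concrete inequality $v(\varepsilon - \delta) > v(\delta - x\delta')$ using the specific structure of flat transseries.

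First, I would invoke \Cref{prop-subsystems} to identify the group $\exp(\tmop{Cont}(\partial))$ under composition with the concrete group $\{x + \rho : \rho \in \mathbb{S},\ v(\rho) > v(x)\}$. Under this identification, each substitution $x + \delta$ corresponds to a unique contracting derivation $g_\delta \partial$ with $\exp(g_\delta \partial)(x) = x + \delta$; expanding the exponential gives $\delta = g_\delta + \frac{1}{2} g_\delta g_\delta' + \cdots$, and hence, using the flatness condition $v(\mathfrak{m}'/\mathfrak{m}) + v(x) \geq 0$ to control the higher-order BCH corrections, one extracts that the differential invariant $v(\delta - x\delta')$ equals the corresponding invariant of $g_\delta$. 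Since conjugation of near-identity substitutions under composition corresponds under the Lie exponential to conjugation of contracting derivations in $(\tmop{Cont}(\partial), \ast)$, \Cref{th-main} translates the conjugacy of $x + \delta$ and $x + \varepsilon$ into conditions on $g_\delta$ and $g_\varepsilon$, namely $v(g_\delta - g_\varepsilon) > v(g_\varepsilon)$ together with $v(g_\delta - g_\varepsilon) > v\bigl((h/(g_\varepsilon h'))'\bigr)$ for all $h \in \mathbb{S}^\times$ with $v(h) > 0$.

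The crux is then to identify the supremum $\sup_h v\bigl((h/(gh'))'\bigr)$ with a quantity expressible in terms of $g - xg'$, and to collapse the full criterion into the single inequality $v(\varepsilon - \delta) > v(\delta - x\delta')$. Expanding $(h/(gh'))' = 1/g - h g'/(g^2 h') - h h''/(g(h')^2)$ and substituting $h = x^a$ produces $(g - xg')/(a g^2)$, pinning down the supremum on one side. The flatness hypothesis then excludes any exotic choice of $h$ (involving logarithms, iterated logarithms, etc.) that might yield a strictly larger valuation. Combining with the BCH identification from the first step converts the criterion into the stated form.

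The main obstacle is the supremum computation itself, which requires a careful use of the flatness hypothesis $v(\mathfrak{m}'/\mathfrak{m}) + v(x) \geq 0$ to rule out all choices of $h$ beyond the natural polynomial family. A secondary subtlety is correctly tracking the shifts that appear when translating between the valuations of $g_\delta - g_\varepsilon$ and $\delta - \varepsilon$, and between $v(g - xg')$ and $v(\delta - x\delta')$; these are BCH calculations modulo strictly smaller terms, which the flatness assumption makes sharp.
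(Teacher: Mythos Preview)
Your overall strategy matches the paper's: translate conjugacy of $x+\delta$ and $x+\varepsilon$ to conjugacy in $(\tmop{Cont}(\partial),\ast)$ via the exponential correspondence, apply \Cref{th-main}, then exploit flatness to pin down the obstruction. But there is a bookkeeping error that makes the argument, as written, produce the wrong criterion.

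You quote \Cref{th-main} in the form ``$v(g_\delta - g_\varepsilon) > v\bigl((h/(g_\varepsilon h'))'\bigr)$ for all $h$'', which is the phrasing of the introductory Theorem~\ref{th-1}. The actual \Cref{th-main} says $v\bigl((g_\delta - g_\varepsilon)/g_\varepsilon^{2}\bigr) > (-v(g_\varepsilon)-\Psi)'$, and these two differ by $2v(g_\varepsilon)$. Concretely: your substitution $h=x^a$ correctly gives $(h/(gh'))' = (g - xg')/(ag^2)$, so your supremum is $v(g_\varepsilon - xg_\varepsilon') - 2v(g_\varepsilon)$. Plugging this into your stated criterion and then using the BCH transfer $v(g_\delta - g_\varepsilon)=v(\delta-\varepsilon)$, $v(g_\varepsilon - xg_\varepsilon')=v(\delta - x\delta')$ yields
\[
v(\varepsilon-\delta) \;>\; v(\delta - x\delta') - 2v(\delta),
\]
not the target $v(\varepsilon-\delta) > v(\delta - x\delta')$. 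The missing $g^2$ is exactly what \Cref{th-main} carries and the introductory paraphrase drops.

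The paper's proof is also more direct on two counts. First, rather than testing $h=x^a$ and then separately arguing that flatness ``excludes exotic $h$'', it simply observes that flatness means $\min\Psi = v(x^{-1})$, so the supremum in $(-v(g)-\Psi)'$ is immediately $v((x/g)')$, and $g^2(x/g)' = g - xg'$ gives the criterion in one line. Second, the transfer from $g_\delta,g_\varepsilon$ back to $\delta,\varepsilon$ is handled by the one-line observation $\exp(f\partial)(x)-x \sim f$ together with \Cref{lem-for-radius} (which shows $1 - x\delta^{\dag} \sim 1 - x\varepsilon^{\dag}$ when $\delta\sim\varepsilon$), rather than unspecified higher-order BCH estimates. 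Finally, you should also cite \Cref{lem-semi-regular} before invoking \Cref{th-main}, since regular asymptotic integration is a hypothesis there.
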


We also recover (\Cref{cor-log-transseries}) the resonance-free part of
Peran's results {\cite[Corollary 2.4]{Peran:parabolic}} on the field
$\mathbb{T}_{\log}$ of logarithmic transseries {\cite{Gre:phd}}. In the
resonant case of formal power series with exponents in an ordered field, we
have a result (\Cref{th-k-powered}), and a counterexample
(\Cref{subsection-resonnance}) to the convexity of the conjugacy problem in
the non-resonant case (\Cref{cor-initial}).

\section{Groups of contracting derivations}

Throughout the paper, we fix a field $C$ of characteristic $0$, a non-empty
directed set $(D, \leqslant)$ and a directed system $\mathcal{S}=
(\Gamma_d)_{d \in D}$, for the inclusion, of non-trivial ordered Abelian
groups. We write $\Gamma$ for the direct limit of $\Gamma_d$.

Let $d \in D$. We have a field $\mathbb{S}_d \assign C \left( \! (\Gamma_d)
\! \right)$ of Hahn series {\cite{Hahn1907}} with coefficients in $C$ and
exponents in $\Gamma_d$. This is the ring, under pointwise sum and Cauchy
product, of functions $f \of \Gamma_d \longrightarrow C$ whose support
$\tmop{supp} f = \{ g \in \Gamma_d \suchthat f (g) \neq 0 \}$ is a
well-ordered subset of~$\Gamma_d$. It contains $C$ canonically, and we have
canonical inclusions $\mathbb{S}_{d_0} \longrightarrow \mathbb{S}_{d_1}$
whenever $d_0 \leqslant d_1$.

There is a notion of infinite sum for certain families in $\mathbb{S}_d$
called {\tmem{summable families}} (see {\cite[Section~3.1]{vdH:noeth}}), and
the corresponding structure is a summability algebra in the sense of
{\cite[Definition~1.27]{BKKMP:strong}}. Let $\mathbb{S}$ be the direct limit
of the directed system $(\mathbb{S}_d)_{d \in D}$. This is a summability
algebra for the direct limit summability structure, where a family is summable
if and only if it takes values in an $\mathbb{S}_{d_0}$, for a $d_0 \in D$, in
which it is summable. A linear map $\mathbb{S} \longrightarrow \mathbb{S}$
(resp. $\mathbb{S}_d \longrightarrow \mathbb{S}_d$) that commutes with
infinite sums is said {\tmem{strongly linear}}, and we write $\tmop{Lin}^+
(\mathbb{S})$ (resp. $\tmop{Lin}^+ (\mathbb{S}_d)$) for the algebra under
pointwise sum and composition of strongly linear maps on $\mathbb{S}$ (resp.
$\mathbb{S}_d$).

\begin{definition}
  \label{def-regular}A linear map $\phi \of \mathbb{S} \longrightarrow
  \mathbb{S}$ is said {\tmem{{\tmstrong{regular}}}} if for all $d \in D$ we
  have $\phi (\mathbb{S}_d) \subseteq \mathbb{S}_d$ and $\phi \upharpoonleft
  \mathbb{S}_d$ is strongly linear. We write $\tmop{Lin}^{\mathcal{S}}
  (\mathbb{S})$ for the set of regular linear maps $\mathbb{S} \longrightarrow
  \mathbb{S}$.
\end{definition}

Note that $\tmop{Lin}^{\mathcal{S}} (\mathbb{S}) \subseteq \tmop{Lin}^+
(\mathbb{S})$. We can see $\tmop{Lin}^{\mathcal{S}} (\mathbb{S})$ as a Lie
algebra for the Lie bracket $\llbracket \cdot, \cdot \rrbracket : (\phi, \psi)
\mapsto \phi \circ \psi - \phi \circ \psi$. The set $\tmop{Der}^{\mathcal{S}}
(\mathbb{S})$ of regular derivations on $\mathbb{S}$ is closed under
$\llbracket \cdot, \cdot \rrbracket$, thus it is a Lie algebra. It is also
closed under infinite sums {\cite[Proposition~2.2]{BKKMP:strong}}.

We have a valuation $v$ on $\mathbb{S}$ given by $v (f) = \min \tmop{supp} f
\in \Gamma$ for all $f \in \mathbb{S}^{\times}$ and~$v (0) = + \infty$. We
write $\preccurlyeq$ for the corresponding dominance relation
{\cite[Definition~3.3.1]{vdH:mt}}, given by $f \preccurlyeq g
\Longleftrightarrow v (f) \geqslant v (g)$ for all $f, g \in \mathbb{S}$. We
write $f \prec g$ if $v (f) > v (g)$, $f \asymp g$ if $v (f) = v (g)$ and $f
\sim g$ if $v (f - g) > v (f)$. We write $\mathbb{S}^{\prec} \assign \{ f \in
\mathbb{S} \suchthat f \prec 1 \}$ for the set of infinitesimal elements in
$\mathbb{S}$.

A linear map $\phi : \mathbb{S} \longrightarrow \mathbb{S}$ is said
{\tmem{contracting}} if $\phi (f) \prec f$ for all $f \in
\mathbb{S}^{\times}$. Given $d_0 \in D$, we write $\tmop{Lin}_{\prec}^+
(\mathbb{S}_{d_0})$ and $\tmop{Lin}_{\prec}^{\mathcal{S}} (\mathbb{S})$ for
the set of contracting strongly linear maps $\mathbb{S}_{d_0} \longrightarrow
\mathbb{S}_{d_0}$ and the set of contracting regular maps $\mathbb{S}
\longrightarrow \mathbb{S}$ respectively.

Lastly, we write $1$-$\tmop{Aut}^{\mathcal{S}} (\mathbb{S})$ for the group,
under composition, of regular algebra automorphisms $\sigma$ of $\mathbb{S}$
such that $\sigma (f) \sim f$ for all $f \in \mathbb{S}$. Since
each $C \tmop{Id}_{\mathbb{S}_{d_0}} + \tmop{Lin}^+_{\prec}
(\mathbb{S}_{d_0})$ has evaluations in the sense of
{\cite[Definition~2.3]{BKKMP:strong}}, and in view of the definition of
regularity, so has $C \tmop{Id}_{\mathbb{S}} +
\tmop{Lin}^{\mathcal{S}}_{\prec} (\mathbb{S})$. So
{\cite[Theorem~2.14]{BKKMP:strong}} applies and yields:

\begin{proposition}
  The set $\tmop{Der}^{\mathcal{S}}_{\prec} (\mathbb{S})$ of contracting
  regular derivations on $\mathbb{S}$ is a group for the
  Baker-Campbell-Hausdorff operation
  \begin{equation}
    \partial \ast \mathd \assign \partial + \mathd + \frac{1}{2}  \llbracket
    \partial, \mathd \rrbracket + \frac{1}{12}  (\llbracket \partial,
    \llbracket \partial, \mathd \rrbracket \rrbracket - \llbracket \mathd,
    \llbracket \partial, \mathd \rrbracket \rrbracket) + \cdots \label{eq-BCH}
    .
  \end{equation}
  Furthermore we have a group isomorphism
  \begin{eqnarray*}
    \exp \of (\tmop{Der}^{\mathcal{S}}_{\prec} (\mathbb{S}),\ast) & \longrightarrow &
    (1 \text{-} \tmop{Aut}^{\mathcal{S}} (\mathbb{S}),\circ)\\
    \partial & \longmapsto & \tmop{Id}_{\mathbb{S}} + \partial + \frac{1}{2}
    \partial \circ \partial + \frac{1}{6} \partial \circ \partial \circ
    \partial + \cdots .
  \end{eqnarray*}
\end{proposition}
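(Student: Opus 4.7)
The proof strategy is essentially dictated by the paragraph preceding the statement: apply \cite[Theorem~2.14]{BKKMP:strong}, so the task reduces to verifying its hypotheses for the set $C\tmop{Id}_{\mathbb{S}}+\tmop{Lin}^{\mathcal{S}}_{\prec}(\mathbb{S})$. I would organise this as a transfer from the level-wise statement (on each summability algebra $\mathbb{S}_d$) to the direct-limit statement (on $\mathbb{S}$), exploiting the definition of regularity.

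First, I would record the algebraic closure properties. If $\phi,\psi\in\tmop{Lin}^{\mathcal{S}}_{\prec}(\mathbb{S})$, then for every $d\in D$ the restrictions $\phi\upharpoonright\mathbb{S}_d,\psi\upharpoonright\mathbb{S}_d$ lie in $\tmop{Lin}^+_{\prec}(\mathbb{S}_d)$, hence so do their composition and their Lie bracket; thus $\tmop{Lin}^{\mathcal{S}}_{\prec}(\mathbb{S})$ is a subalgebra of $\tmop{Lin}^+(\mathbb{S})$ stable under $\llbracket\cdot,\cdot\rrbracket$, and $\tmop{Der}^{\mathcal{S}}_{\prec}(\mathbb{S})$ is a Lie subalgebra. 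In particular every iterated bracket appearing in \eqref{eq-BCH} and every power appearing in the exponential series preserves $\mathbb{S}_d$ and restricts there to a strongly linear contracting operator.

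Second, I would make explicit what it means for $C\tmop{Id}_{\mathbb{S}}+\tmop{Lin}^{\mathcal{S}}_{\prec}(\mathbb{S})$ to have evaluations, as stated in the paragraph preceding the proposition. Given a summable family $(\phi_i)_{i\in I}$ in this scaled monoid together with a summable family of series in $\mathbb{S}$ to be plugged in, each relevant input in fact lies in some $\mathbb{S}_{d_0}$, and then by regularity the family $(\phi_i\upharpoonright\mathbb{S}_{d_0})_i$ takes values in $C\tmop{Id}_{\mathbb{S}_{d_0}}+\tmop{Lin}^+_{\prec}(\mathbb{S}_{d_0})$, where an evaluation is available by the hypothesis from \cite[Definition~2.3]{BKKMP:strong}. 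The cofinality properties of $(D,\leqslant)$ ensure compatibility of these level-wise evaluations under the inclusions $\mathbb{S}_{d_0}\hookrightarrow\mathbb{S}_{d_1}$, so they glue to a global evaluation on $\mathbb{S}$ which itself is regular.

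Third, I would apply \cite[Theorem~2.14]{BKKMP:strong}. Its conclusion directly gives both assertions: the BCH series \eqref{eq-BCH} converges in $\tmop{Der}^{\mathcal{S}}_{\prec}(\mathbb{S})$ and turns it into a group with neutral element $0$ and explicit inversion, and the exponential series defines a bijection onto $\tmop{Id}_{\mathbb{S}}+\tmop{Lin}^{\mathcal{S}}_{\prec}(\mathbb{S})$ intertwining $\ast$ with functional composition. To identify the target with $1\text{-}\tmop{Aut}^{\mathcal{S}}(\mathbb{S})$, I would check separately that $\exp(\partial)$ is an algebra morphism precisely when $\partial$ is a derivation (a Leibniz-rule bookkeeping argument, already part of \cite[Theorem~2.14]{BKKMP:strong}), and that regularity of $\partial$ is equivalent to regularity of $\exp(\partial)$, which follows from level-wise preservation already noted in step~1 together with the fact that the logarithm series provides an inverse that also preserves regularity.

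The main obstacle, as far as I can tell, lies entirely in step~2: phrasing the direct-limit summability structure and the notion of evaluation on $\mathbb{S}$ in a way that genuinely matches the setup of \cite[Definition~2.3]{BKKMP:strong}, so that \cite[Theorem~2.14]{BKKMP:strong} can be quoted as a black box rather than reproved. Once the evaluation structure is set up correctly, every other verification reduces to a routine restriction-to-$\mathbb{S}_d$ argument that is already handled at the level of the building blocks.
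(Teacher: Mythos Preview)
Your proposal is correct and follows exactly the approach the paper takes: the paper's entire justification is the sentence immediately preceding the proposition, namely that each $C\tmop{Id}_{\mathbb{S}_{d_0}}+\tmop{Lin}^+_{\prec}(\mathbb{S}_{d_0})$ has evaluations in the sense of \cite[Definition~2.3]{BKKMP:strong}, that regularity transfers this to $C\tmop{Id}_{\mathbb{S}}+\tmop{Lin}^{\mathcal{S}}_{\prec}(\mathbb{S})$, and that \cite[Theorem~2.14]{BKKMP:strong} then applies directly. Your three steps simply unpack this one sentence in more detail than the paper does, and your identification of step~2 as the only nontrivial point matches the paper's emphasis.
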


Hidden terms in (\ref{eq-BCH}) are $\mathbb{Q}$-linear combinations of iterated
Lie brackets of lengths $> 3$.

\begin{remark}
  \label{rem-valuation-sum}We will freely use the fact that if a family
  $(f_i)_{i \in I} \in \mathbb{S}^I$ is summable and $f_i \preccurlyeq g$ for
  all $i \in I$, then $\sum_{i \in I} f_i \preccurlyeq g$. This follows from
  the fact that $\tmop{supp} \sum_{i \in I} f_i \subseteq \bigcup_{i \in I}
  \tmop{supp} f_i$, see {\cite[Section~3]{vdH:noeth}}.
\end{remark}

\subsection{Contractive hull of a derivation}

We now introduce a slight generalisation of the class of groups defined in
{\cite[Section~7.2]{Bag:c-val}}. Let $\partial \of \mathbb{S} \longrightarrow
\mathbb{S}$ be a fixed regular derivation with kernel $\tmop{Ker} (\partial) =
C$. The {\tmem{contractive hull}} of $\partial$ is defined as the following
subset of $\mathbb{S}$:
\[ \tmop{Cont} (\partial) \assign \left\{ f \in \mathbb{S} \suchthat f
   \partial \text{ is contracting} \right\} . \]
Identifying each $f \in \tmop{Cont} (\partial)$ with the regular contracting
derivation $f \partial$, we obtain a Lie bracket $\llbracket \cdot, \cdot
\rrbracket : \tmop{Cont} (\partial) \times \tmop{Cont} (\partial)
\longrightarrow \tmop{Cont} (\partial) ; (f, g) \mapsto f \partial (g) -
\partial (f) g$ on $\tmop{Cont} (\partial)$. It is easy to see that
$\tmop{Cont} (\partial) \partial$ is closed under sums of summable families.
Thus $\tmop{Cont} (\partial)$ is a group for the operation
\begin{equation}
  f \ast g \assign f + g + \frac{1}{2}  \llbracket f, g \rrbracket +
  \frac{1}{12}  (\llbracket f, \llbracket f, g \rrbracket \rrbracket -
  \llbracket g, \llbracket f, g \rrbracket \rrbracket) + \cdots
  \label{eq-BHC-cont} .
\end{equation}
The inverse of an $f \in \tmop{Cont} (\partial)$ for $\ast$ is simply $- f$.
We also have the following consequences of {\cite[Lemmas~7.14 and~7.15 and
Remark~7.19]{Bag:c-val}}. We give the proofs here for completion.

\begin{lemma}
  \label{lem-Lie-cont}For $f, g \in \tmop{Cont} (\partial) \backslash \{ 0
  \}$, we have $\llbracket f, g \rrbracket \prec f, g$.
\end{lemma}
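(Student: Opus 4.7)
The plan is to establish the stronger statement that $v(\llbracket f,g\rrbracket) > \max(v(f), v(g))$, which is exactly the assertion $\llbracket f, g \rrbracket \prec f$ and $\llbracket f, g \rrbracket \prec g$. Since $\llbracket f, g \rrbracket = f\partial(g) - g\partial(f)$, by \Cref{rem-valuation-sum} applied to a two-term sum it suffices to bound each term separately.

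The first step I would take is to derive the auxiliary fact that $\partial(f) \prec 1$ and $\partial(g) \prec 1$. For this, apply the hypothesis that $f\partial$ is contracting to the element $f$ itself: since $f \neq 0$, we have $f \partial(f) \prec f$, i.e.\ $v(f) + v(\partial(f)) > v(f)$, hence $v(\partial(f)) > 0$ (the case $\partial(f) = 0$ is trivially covered since $v(0) = +\infty$). The symmetric argument with $g$ gives $v(\partial(g)) > 0$.

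With this in hand, I would bound $f\partial(g)$ in two independent ways. On the one hand, the contracting property of $f \partial$ applied to $g$ yields $v(f\partial(g)) > v(g)$. On the other hand, using $v(\partial(g)) > 0$ from the previous step, $v(f\partial(g)) = v(f) + v(\partial(g)) > v(f)$. Combining, $v(f\partial(g)) > \max(v(f), v(g))$. Swapping the roles of $f$ and $g$ gives $v(g\partial(f)) > \max(v(f), v(g))$ by the same reasoning, and the conclusion follows from $v(a - b) \geqslant \min(v(a), v(b))$.

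I do not anticipate any substantive obstacle: the argument is a short valuation calculation whose only subtle ingredient is the idea of feeding each contracting derivation its own generator in order to upgrade ``$\partial$ decreases valuation with respect to $f$'' into the simpler ``$\partial$ sends everything into the infinitesimals''. Once that observation is isolated, both halves of $\llbracket f, g \rrbracket \prec f, g$ drop out symmetrically.
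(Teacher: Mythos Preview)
Your proof is correct and follows essentially the same approach as the paper: the key step in both is the self-application trick (feeding the contracting derivation its own generator to obtain $\partial(f)\prec 1$), after which each summand of $\llbracket f,g\rrbracket$ is bounded by a direct use of the contracting property. The only difference is organizational: the paper invokes symmetry once at the outset and proves only $\llbracket f,g\rrbracket\prec f$, whereas you bound each term by both $f$ and $g$ explicitly.
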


\begin{proof}
  We may switch $f$ and $g$, so it suffices to show that $\llbracket f, g
  \rrbracket \prec f$. Since $g \partial$ is contracting, we have $g \partial
  (g) \prec g$, which means that $\partial (g) \prec 1$. We deduce that $f
  \partial (g) \prec f$. We also have $\partial (f) g = (g \partial) (f) \prec
  f$ since $g \partial$ is contracting. We deduce that $\llbracket f, g
  \rrbracket \prec f$.
\end{proof}

\begin{lemma}
  \label{lem-minus}For $f, g \in \tmop{Cont} (\partial) \setminus \{ 0 \}$
  with $f \neq g$, we have $f - g \succ \llbracket f, g \rrbracket$.
\end{lemma}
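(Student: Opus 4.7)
The plan is to reduce the statement directly to the preceding \Cref{lem-Lie-cont} by exploiting the bilinearity and antisymmetry of the Lie bracket. Since $\llbracket f, f \rrbracket = 0$, we have the identity $\llbracket f, g \rrbracket = \llbracket f, g - f \rrbracket$ for any $f, g \in \tmop{Cont} (\partial)$. So if I can arrange for $g - f$ to again lie in $\tmop{Cont} (\partial) \setminus \{ 0 \}$, I can apply \Cref{lem-Lie-cont} to the pair $(f, g - f)$ and immediately conclude
\[ \llbracket f, g \rrbracket = \llbracket f, g - f \rrbracket \prec g - f \asymp f - g, \]
which is the desired conclusion.

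The remaining point is to justify that $g - f \in \tmop{Cont} (\partial)$. This follows from the closure of $\tmop{Cont} (\partial) \partial$ under summable sums (in particular finite sums), noted in the paragraph introducing the contractive hull: the derivation $(g - f) \partial = g \partial - f \partial$ is a sum of two contracting regular derivations, hence regular and contracting. Since $f \neq g$ by hypothesis, $g - f \neq 0$, so $(f, g - f)$ is a valid input to \Cref{lem-Lie-cont}.

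I expect no real obstacle here: the whole content is the algebraic identity $\llbracket f, g \rrbracket = \llbracket f, g - f \rrbracket$, together with the closure of $\tmop{Cont} (\partial)$ under subtraction. If anything requires care, it is only confirming that the bracket defined on $\tmop{Cont} (\partial)$ (via the identification $f \leftrightarrow f \partial$) is indeed bilinear over $C$ in the sense needed — but this is immediate from its definition $\llbracket f, g \rrbracket = f \partial (g) - \partial (f) g$ and the $C$-linearity of $\partial$. Thus the lemma is an essentially one-line corollary of \Cref{lem-Lie-cont}.
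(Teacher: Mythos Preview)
Your proof is correct and is in fact cleaner than the paper's. The paper argues by writing $f = \varphi + \delta$ and $g = \varphi + \varepsilon$ with $\varphi$ the longest common truncation, so that $f - g \asymp \max_{\preccurlyeq}(\delta,\varepsilon)$; it then expands $\llbracket f, g \rrbracket = \llbracket \varphi, \varepsilon \rrbracket + \llbracket \delta, \varphi \rrbracket + \llbracket \delta, \varepsilon \rrbracket$ and bounds each summand by \Cref{lem-Lie-cont}. Your route bypasses the truncation decomposition entirely: the single identity $\llbracket f, g \rrbracket = \llbracket f, g - f \rrbracket$, together with closure of $\tmop{Cont}(\partial)$ under subtraction and one application of \Cref{lem-Lie-cont} to the pair $(f, g-f)$, gives the result in one line. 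Both arguments ultimately rest on \Cref{lem-Lie-cont}, but yours exploits antisymmetry to avoid splitting into cases; the paper's decomposition buys nothing extra here and is simply more laborious.
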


\begin{proof} We have $\llbracket f, g \rrbracket = \llbracket f-g, g \rrbracket$, so this follows from Lemma~\ref{lem-Lie-cont}.
\end{proof}

In view of (\ref{eq-BHC-cont}), Lemmas~\ref{lem-Lie-cont} and~\ref{lem-minus}
and \Cref{rem-valuation-sum}, we obtain:

\begin{corollary}
  \label{cor-sum-BCH}For $f, g \in \tmop{Cont} (\partial) \backslash \{ 0 \}$,
  we have $f \ast g \sim f + g$.
\end{corollary}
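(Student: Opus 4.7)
I plan to decompose $f \ast g - (f + g)$ into a summable series of iterated Lie brackets and show that it is strictly dominated by $f + g$ in the valuation. The argument splits into two estimates: (i) every term in the series is dominated by $\llbracket f, g \rrbracket$, and (ii) $\llbracket f, g \rrbracket \prec f + g$. Combining them gives $f \ast g - (f + g) \prec f + g$, which is exactly $f \ast g \sim f + g$.

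First I would dispose of the degenerate case $f + g = 0$. Then $g = -f$, and by bilinearity and skew-symmetry of $\llbracket \cdot, \cdot \rrbracket$ every iterated Lie bracket of length $\geqslant 2$ in $f$ and $-f$ vanishes, because some innermost bracket is of the form $\llbracket \alpha f, \beta f \rrbracket = 0$. Inspection of \Cref{eq-BHC-cont} then gives $f \ast g = f + g = 0$, and the conclusion holds trivially.

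Assume henceforth $f + g \neq 0$. For estimate (i), I read off from \Cref{eq-BHC-cont} that each term of $f \ast g - (f + g)$ is a rational multiple of an iterated Lie bracket $B$ of length $n \geqslant 2$ in $f$ and $g$. A routine induction on nesting depth based on \Cref{lem-Lie-cont} yields $B \preccurlyeq \llbracket f, g \rrbracket$ for every such $B$ (with strict $\prec$ once $n \geqslant 3$). By \Cref{rem-valuation-sum}, the summable sum inherits this bound, giving $f \ast g - (f + g) \preccurlyeq \llbracket f, g \rrbracket$.

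The crux is estimate (ii), and the key idea is to apply \Cref{lem-minus} not to the pair $(f, g)$ but to $(f, -g)$. This is legitimate: $\tmop{Cont}(\partial)$ is closed under negation (if $f \partial$ is contracting then so is $(-f) \partial$), and $f \neq -g$ is precisely our standing assumption $f + g \neq 0$. The lemma then yields $f + g = f - (-g) \succ \llbracket f, -g \rrbracket = -\llbracket f, g \rrbracket$, i.e.\ $\llbracket f, g \rrbracket \prec f + g$. Combined with (i), this proves $f \ast g \sim f + g$. The only real decision point in the argument is this twist of substituting $-g$ for $g$ in \Cref{lem-minus}; applying the lemma directly to $(f, g)$ would only recover the weaker relation $f \ast g \prec f, g$ already implied by \Cref{lem-Lie-cont} alone.
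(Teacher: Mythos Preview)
Your proof is correct and follows exactly the approach the paper intends: the paper's own ``proof'' is just the sentence ``In view of \Cref{eq-BHC-cont}, Lemmas~\ref{lem-Lie-cont} and~\ref{lem-minus} and \Cref{rem-valuation-sum}, we obtain,'' and you have unpacked precisely those four ingredients. In particular, your observation that \Cref{lem-minus} must be applied to the pair $(f,-g)$ rather than $(f,g)$ is the right (and only) way to extract $\llbracket f, g \rrbracket \prec f+g$ from that lemma, so this is the intended reading.
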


\begin{lemma}
  \label{prop-BCH-nearly-Abelian}\label{lem-comm12}For all $f, g \in
  \tmop{Cont} (\partial)$, we have
  \begin{eqnarray}
    f \ast g \ast (- f) & \sim & g \text{\quad and}  \label{eq-lem-comm12-1}\\
    f \ast g \ast (- f) - g & \sim & \llbracket f, g \rrbracket . 
    \label{eq-lem-comm12-2}
  \end{eqnarray}
\end{lemma}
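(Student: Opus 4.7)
The plan is to prove \Cref{eq-lem-comm12-2} first and deduce \Cref{eq-lem-comm12-1} from it. Indeed, by \Cref{lem-Lie-cont} we have $\llbracket f, g \rrbracket \prec g$, so $f \ast g \ast (-f) - g \sim \llbracket f, g \rrbracket$ forces $v(f \ast g \ast (-f) - g) = v(\llbracket f, g \rrbracket) > v(g)$, which is \Cref{eq-lem-comm12-1}. The degenerate case $\llbracket f, g \rrbracket = 0$, which makes the Lie subalgebra generated by $f$ and $g$ abelian and collapses \Cref{eq-BHC-cont} to addition on that subgroup, can be disposed of separately: one gets $f \ast g \ast (-f) = g$ exactly.

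To establish \Cref{eq-lem-comm12-2}, I would apply \Cref{eq-BHC-cont} twice: once to compute $h \assign f \ast g$ and once to compute $h \ast (-f)$. This expresses $f \ast g \ast (-f)$ as a summable $\mathbb{Q}$-linear combination of iterated Lie brackets with entries in $\{f, g, -f\}$, which I would organise by bracket length $n$. The length-$1$ contribution collapses to $(f + g) + (-f) = g$. For length $2$, the first BCH contributes $\frac{1}{2} \llbracket f, g \rrbracket$, and the length-$2$ part of $\frac{1}{2} \llbracket h, -f \rrbracket$ from the second BCH expands via bilinearity and $\llbracket f, -f \rrbracket = 0$ to $\frac{1}{2} \llbracket f + g, -f \rrbracket = \frac{1}{2} \llbracket g, -f \rrbracket = \frac{1}{2} \llbracket f, g \rrbracket$; the two halves sum to exactly $\llbracket f, g \rrbracket$.

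It remains to bound the length-$\geq 3$ contributions. I would prove by induction on length $n \geq 2$ that every non-zero iterated Lie bracket $\xi$ with entries in $\{f, g\}$ (absorbing signs arising from $-f$ into the outer rational coefficient) involving at least one of each variable satisfies $\xi \preccurlyeq \llbracket f, g \rrbracket$, with strict inequality when $n \geq 3$. Writing $\xi = \llbracket \xi_1, \xi_2 \rrbracket$, any $\xi_i$ of length $\geq 2$ must involve both $f$ and $g$ for $\xi$ to be non-zero (a Lie bracket of length $\geq 2$ pure in one variable vanishes by antisymmetry of $\llbracket \cdot, \cdot \rrbracket$); the inductive hypothesis then gives $\xi_i \preccurlyeq \llbracket f, g \rrbracket$, and \Cref{lem-Lie-cont} yields $\xi \prec \xi_i \preccurlyeq \llbracket f, g \rrbracket$. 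Summing via \Cref{rem-valuation-sum} then concludes \Cref{eq-lem-comm12-2}.

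The main obstacle is the inductive combinatorics: the identification of the length-$2$ contribution as exactly $\llbracket f, g \rrbracket$ relies on the cancellation $\llbracket f, -f \rrbracket = 0$, and showing that every length-$\geq 3$ iterated bracket is strictly dominated by $\llbracket f, g \rrbracket$ requires careful tracking through the induction, where the vanishing of brackets pure in one variable is what eliminates the otherwise problematic families of terms.
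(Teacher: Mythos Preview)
Your proof is correct and follows essentially the same approach as the paper: apply \Cref{eq-BHC-cont} twice, identify the length-$1$ and length-$2$ contributions as $g$ and $\llbracket f, g \rrbracket$ respectively, and bound the higher-order tail via \Cref{lem-Lie-cont} and \Cref{rem-valuation-sum}. The only difference is presentational: the paper carries out the computation concretely in two steps, absorbing the tail into a remainder $B \prec \llbracket f, g \rrbracket$ justified by ``$\cdots$ by \Cref{lem-Lie-cont}'', whereas you make that tail bound explicit through an induction on bracket length.
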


\begin{proof}
  If $f = 0$ or $g = 0$, then $f \ast g \ast (- f) = g$. Suppose that $f, g
  \neq 0$. If $\llbracket f, g \rrbracket = 0$, then $f \ast g = f + g = g + f
  = g \ast f$ so $f \ast g \ast (- f) = g$. Suppose that $\llbracket f, g
  \rrbracket \neq 0$ and set $A \assign \llbracket f, g \rrbracket$. So $A
  \prec g$ by \Cref{lem-Lie-cont}. We see with \Cref{lem-Lie-cont} that $f
  \ast g = f + g + \frac{1}{2} A + \varepsilon$ for an $\varepsilon \prec A$.
  So
  \begin{eqnarray*}
    f \ast g \ast (- f) & = & \left( f + g + \frac{1}{2} A + \varepsilon
    \right) \ast (- f)\\
    & = & g + \frac{1}{2} A + \varepsilon + \frac{1}{2}  \left( \llbracket f,
    - f \rrbracket + \llbracket g, - f \rrbracket + \left\llbracket
    \frac{1}{2} A + \varepsilon, - f \right\rrbracket \right) + \cdots\\
    & = & g + \frac{1}{2} A + \varepsilon + \frac{1}{2}  \left( \llbracket g,
    - f \rrbracket + \left\llbracket \frac{1}{2} A + \varepsilon, - f
    \right\rrbracket \right) + \cdots\\
    & = & g + A + B
  \end{eqnarray*}
  for $B = \varepsilon + \frac{1}{2}  \left\llbracket \frac{1}{2} A +
  \varepsilon, - f \right\rrbracket + \cdots \prec A$ by \Cref{lem-Lie-cont}.
  This shows that $f \ast g \ast (- f) \sim g$ and that $f \ast g \ast (- f) -
  g \sim A = \llbracket f, g \rrbracket$.
\end{proof}

\begin{example}
  If $D = \{ \bullet \}$, $\Gamma_{\bullet} = \Gamma = (\mathbb{Z}, +, 0, <)$,
  and $\partial = \frac{\mathd}{\mathd t}$ is the derivation with respect to
  $t$ on $C ( \! (t) \! ) = C ( \! (\mathbb{Z}) \!
  )$, then $\tmop{Cont} (\partial) = \left\{ f \in C ( \!
  (\mathbb{Z}) \! ) \suchthat f \prec t^2 \right\}$.
\end{example}

\subsection{Integration and asymptotic integration}

From now on, given $f \in \mathbb{S}$ and $g \in \mathbb{S}^{\times}$, we
sometimes write $f' \assign \partial (f)$ and $g^{\dag} \assign \partial (g) /
g$. {\tmstrong{We make the assumption that $(\mathbb{S}, \preccurlyeq,
\partial)$ is an H-asymptotic field}} in the sense of {\cite[p 324]{vdH:mt}}.
In other words, we assume that for all $f, g \in \mathbb{S}^{\times}$ with $f,
g \prec 1$, we have
\[ f \prec g \Longleftrightarrow f' \prec g'
   \text{{\hspace{3em}}and{\hspace{3em}}$f \prec g \Longrightarrow f^{\dag}
   \succcurlyeq g^{\dag} .$} \]
Since $\tmop{Ker} (\partial) +\mathbb{S}^{\prec}$ is the valuation ring of
$(\mathbb{S}, v)$, this means that $v$ is a differential valuation on
$(\mathbb{S}, \partial)$ in the sense of {\cite[Definition, p 4]{Rosli80}}. We
then have well-defined maps
\[ \mathord{'} \of \Gamma \setminus \{ 0 \} \longrightarrow \Gamma \: ; \: v
   (g) \mapsto v (g')
   \text{{\hspace{3em}}and{\hspace{3em}}$\mathord{\:^{\dag}} \of \Gamma
   \setminus \{ 0 \} \longrightarrow \Gamma \: ; \: v (g) \mapsto v
   (g^{\dag})$}, \]
and the structure $\left( \Gamma, +, 0, <, \mathord{\:^{\dag}} \right)$ is
called the {\tmem{asymptotic couple}} of~$(\mathbb{S}, \preccurlyeq,
\partial)$ (see {\cite[p 325]{vdH:mt}}). By {\cite[Theorem~4]{Rosli80}}, it is
an asymptotic couple in the sense of {\cite[p 273]{vdH:mt}}. We have:

\begin{lemma}
  \label{lem-monotone}{\tmem{{\cite[Lemma~6.5.4]{vdH:mt}}}} The map
  $\mathord{'} \of \Gamma \setminus \{ 0 \} \longrightarrow \Gamma$ is
  strictly increasing.
\end{lemma}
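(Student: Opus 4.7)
My plan is to translate the statement into a property of the asymptotic couple $(\Gamma,\psi)$ of $(\mathbb{S},\preccurlyeq,\partial)$, where $\psi\of\Gamma\setminus\{0\}\to\Gamma$; $v(g)\mapsto v(g^{\dag})$ is the logarithmic-derivative map. Under this translation, $v(g)'=v(g)+\psi(v(g))$ whenever $v(g)\neq 0$, so the statement becomes that $\gamma\mapsto\gamma+\psi(\gamma)$ is strictly increasing on $\Gamma\setminus\{0\}$. I would first verify well-definedness of $\psi$ via the first H-asymptotic clause applied in both directions (if $v(g_1)=v(g_2)>0$ then neither $g_1\prec g_2$ nor $g_2\prec g_1$, hence neither $g_1'\prec g_2'$ nor $g_2'\prec g_1'$, so $v(g_1')=v(g_2')$), reduce the case $v(g)<0$ to the positive one through $g'=-(1/g)'\cdot g^2$, and read off the reflection identity $\psi(-\gamma)=\psi(\gamma)$ from the same identity.

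From the H-asymptotic hypothesis I would extract two monotonicity ingredients. First, the clause $f\prec g\prec 1\Leftrightarrow f'\prec g'$ gives strict monotonicity of $\gamma\mapsto\gamma+\psi(\gamma)$ on $\Gamma^{>0}$: take $f,g\prec 1$ with $v(f)<v(g)$, so $g\prec f$, whence $g'\prec f'$, i.e.\ $v(f)+\psi(v(f))<v(g)+\psi(v(g))$. Second, the clause $f\prec g\prec 1\Rightarrow f^{\dag}\succcurlyeq g^{\dag}$ unwinds to $v(f^{\dag})\leq v(g^{\dag})$, i.e.\ $\psi$ is weakly decreasing on $\Gamma^{>0}$.

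I would then conclude by a case analysis on $\alpha<\beta$ in $\Gamma\setminus\{0\}$. The case $0<\alpha<\beta$ is exactly the first ingredient. For $\alpha<\beta<0$, the reflection identity reduces the comparison to $\alpha+\psi(|\alpha|)$ and $\beta+\psi(|\beta|)$; since $|\alpha|>|\beta|>0$, the second ingredient gives $\psi(|\alpha|)\leq\psi(|\beta|)$, and combined with $\alpha<\beta$ this yields strict inequality. For the mixed case $\alpha<0<\beta$, I would split on the sign of $|\alpha|-\beta$: if $|\alpha|\geq\beta$ then the second ingredient gives $\psi(|\alpha|)\leq\psi(\beta)$, whence $\alpha+\psi(|\alpha|)\leq\alpha+\psi(\beta)<\psi(\beta)<\beta+\psi(\beta)$; if $|\alpha|<\beta$ then $\alpha<|\alpha|$ combined with the first ingredient applied to $|\alpha|<\beta$ in $\Gamma^{>0}$ gives $\alpha+\psi(|\alpha|)<|\alpha|+\psi(|\alpha|)<\beta+\psi(\beta)$.

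The main obstacle is the mixed case $\alpha<0<\beta$, which is the only one not settled by a single H-asymptotic clause and genuinely requires combining both together with the reflection identity. No finer structural input beyond the axioms already collected in the excerpt is needed, so the argument is precisely the one recorded in \cite[Lemma~6.5.4]{vdH:mt}.
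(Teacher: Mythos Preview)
Your argument is correct. Note, however, that the paper does not give its own proof of this lemma: it merely records the statement with a citation to \cite[Lemma~6.5.4]{vdH:mt} and moves on. So there is nothing to compare against beyond the cited reference, and your write-up is essentially a reconstruction of that reference's argument---reducing to the asymptotic couple $(\Gamma,\psi)$, using the reflection $\psi(-\gamma)=\psi(\gamma)$, and combining strict monotonicity of $\gamma\mapsto\gamma+\psi(\gamma)$ on $\Gamma^{>0}$ with the weak decrease of $\psi$ on $\Gamma^{>0}$ to handle the three sign cases. One cosmetic remark: your phrasing ``the clause $f\prec g\prec 1\Leftrightarrow f'\prec g'$'' conflates the hypothesis $f,g\prec 1$ with the biconditional $f\prec g\Leftrightarrow f'\prec g'$; the paper states these separately, and your actual use of the clause is correct.
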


Given $f \in \mathbb{S}$, an asymptotic integral of $f$ in $(\mathbb{S},
\preccurlyeq, \partial)$ is an element $A \in \mathbb{S}$ with $A' \sim f$.
Such an element may not exist. If such an element always exist, then
$(\mathbb{S}, \preccurlyeq, \partial)$ is said to be {\tmem{closed under
asymptotic integration}}. In the case when $D$ is a singleton, being closed
under asymptotic integration is equivalent {\cite[Lemma~1.7]{vdH:dagap}} to
the existence of a strongly linear right inverse for $\partial$.

A crucial property of $(\mathbb{S}, \preccurlyeq, \partial)$ is that
{\cite[Theorem~9.2.1]{vdH:mt}} there is at most one $\gamma \in \Gamma$ such
that $\gamma \nin (\Gamma \setminus \{ 0 \})'$. If such an element $\gamma$
exist, then we call it the {\tmem{pseudo-gap}} of $(\mathbb{S}, \preccurlyeq,
\partial)$. If no such $\gamma$ exists,
then $\mathbb{S}$ is closed under asymptotic integration. Indeed, since
$\tmop{Ker} (\partial) = C$, for any $f \in \mathbb{S} \setminus \{ 0 \}$ and
any $g$ with $v (g') = v (f)$, the element $cg$ is an asymptotic integral of
$f$ where $c$ is the leading coefficient of $f / g'$. 

An important subset of $\Gamma$ is the psi-set
\[ \Psi \assign \{ \gamma^{\dag} \suchthat \gamma \in \Gamma \wedge \gamma > 0
   \} \subseteq \Gamma . \]
Indeed, the pseudo-gap of $(\mathbb{S}, \preccurlyeq, \partial)$ is either the
maximum of $\Psi$ if this maximum exists, or the unique $\gamma \in \Gamma$
with $\Psi < \gamma < \{ \gamma' \suchthat \gamma \in \Gamma \wedge \gamma > 0
\}$ if $\Psi$ has no maximum ({\cite[Theorem~9.2.1
and~Corollary~9.2.4]{vdH:mt}}) and such an element exists. We will require the following technical lemma:

\begin{lemma}\label{lem-careful-gaps}
    Let $d_0,d \in D$ with $d_0\leqslant d$ and let $\gamma \in \Gamma_d$ such that $\gamma'$ is the pseudo-gap of $(\mathbb{S}_{d_0},\preccurlyeq,\partial)$. Then we have $\mathbb{N} \gamma + \Gamma_{d_0} \subseteq (\mathbb{N} \gamma + \Gamma_{d_0})'$.
\end{lemma}

\begin{proof}
    Let $\alpha \in \Gamma_{d_0}$ and $n \in \mathbb{N}$. If $\alpha \notin (\Gamma_{d_0})'$, then $\alpha = \gamma'$. Since $n+1\neq 0$, we have $n \gamma +\alpha = ((n+1) \gamma)' \in (\mathbb{N} \gamma)'$. Suppose now that $\alpha = \beta'$ for a $\beta \in \Gamma_{d_0}$. If $n =0$ then $n \gamma + \alpha = \beta' \in \Gamma_{d_0}'$. We now assume that $n>0$. We have $\gamma'\geqslant (\Gamma_{d_0}^{\neq})^{\dag}$ and $\gamma <0$, so $\gamma^{\dag} =\gamma'-\gamma>(\Gamma_{d_0}^{\neq})^{\dag}$. In particular $\gamma^{\dag} > \beta^{\dag}$, whence $(n\gamma)^{\dag} = \gamma^{\dag}>\beta^{\dag}$. Thus $(n\gamma)'+\beta>n\gamma+\beta' = n \gamma+\alpha$, which means in view of the Leibniz rule that $(n\gamma + \beta)' = n \gamma+\alpha$, so $n \gamma+\alpha \in \Gamma_d'$.\end{proof}

\begin{lemma}
  \label{lem-cont-cond}For $f \in \mathbb{S}$, we have $f \in \tmop{Cont}
  (\partial)$ if and only if $v (f) + \Psi > 0$.
\end{lemma}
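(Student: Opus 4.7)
The plan is to unpack the definition of ``contracting'' and reduce it to an inequality on valuations that directly involves $\Psi$. By definition, $f \in \tmop{Cont}(\partial)$ means $fh' \prec h$ for every $h \in \mathbb{S}^{\times}$. Since $\tmop{Ker}(\partial) = C$, the condition is automatic whenever $h \in C$ (there $h' = 0$), so it suffices to verify it for $h \in \mathbb{S}^{\times} \setminus C$, in which case $h' \neq 0$ and the inequality rearranges to $v(f) + v(h^{\dag}) > 0$, where $h^{\dag} = h'/h$. The task therefore becomes: describe the set $\{v(h^{\dag}) : h \in \mathbb{S}^{\times} \setminus C\}$ and compare it to $\Psi$.

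For the forward direction, I would restrict to $h \in \mathbb{S}^{\prec} \setminus \{0\}$. As $v(h)$ ranges over $\Gamma^{>0}$, the values $v(h^{\dag})$ exhaust $\Psi$ by definition, so the contracting condition immediately yields $v(f) + \Psi > 0$. The converse is the more involved direction; assuming $v(f) + \Psi > 0$, I would verify the rearranged inequality for every $h \in \mathbb{S}^{\times} \setminus C$ by splitting on $v(h)$. If $v(h) > 0$ the conclusion is direct since $v(h^{\dag}) \in \Psi$. If $v(h) < 0$, I would use the identity $(h^{-1})^{\dag} = -h^{\dag}$, giving $v(h^{\dag}) = v((h^{-1})^{\dag}) \in \Psi$ because $v(h^{-1}) > 0$. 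If $v(h) = 0$ and $h \notin C$, I would write $h = c + \epsilon$ with $c \in C^{\times}$ and $\epsilon := h - c \in \mathbb{S}^{\prec} \setminus \{0\}$; then $h^{\dag} = \epsilon'/h$ and $v(h^{\dag}) = v(\epsilon') = v(\epsilon) + v(\epsilon^{\dag})$. Since $v(\epsilon^{\dag}) \in \Psi$, the hypothesis gives $v(f) + v(\epsilon^{\dag}) > 0$, and adding the strictly positive $v(\epsilon)$ preserves the inequality.

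The only real obstacle is the third case of the reverse direction: $v(h^{\dag})$ does not lie in $\Psi$ itself but rather in $(\Gamma^{>0})'$, so the hypothesis $v(f) + \Psi > 0$ does not apply to $v(h^{\dag})$ directly. What makes this harmless is the decomposition $v(\epsilon') = v(\epsilon) + v(\epsilon^{\dag})$, which peels off a positive summand and leaves a term in $\Psi$ to which the hypothesis applies. All other steps are routine manipulations of the asymptotic couple $(\Gamma, +, 0, <, \mathord{\:^{\dag}})$ together with the $H$-asymptotic hypothesis established before the statement.
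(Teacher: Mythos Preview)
Your proof is correct and follows essentially the same approach as the paper: both reduce the condition $fh'\prec h$ for all $h\in\mathbb{S}^{\times}\setminus C$ to the restriction $h\in\mathbb{S}^{\prec}$ via the identity $(h^{-1})^{\dag}=-h^{\dag}$ for the case $v(h)<0$ and the observation $(c+\epsilon)^{\dag}\asymp\epsilon'\prec\epsilon^{\dag}$ for the case $v(h)=0$. The paper simply states these two facts and concludes in one line, whereas you spell out the forward and converse directions separately with the decomposition $v(\epsilon')=v(\epsilon)+v(\epsilon^{\dag})$ made explicit; the substance is identical.
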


\begin{proof}
  We have $f \in \tmop{Cont} (\partial)$ if and only if $fg' \prec g$ for all
  $g \in \mathbb{S}$, i.e. if and only if $f \prec \frac{1}{g^{\dag}}$ for all
  $g \in \mathbb{S} \setminus C$. Since $(g^{- 1})^{\dag} = - g^{\dag} \asymp
  g^{\dag}$ for all $g \in \mathbb{S}$ and since $(c + \varepsilon)^{\dag}
  \asymp \varepsilon' \prec \varepsilon^{\dag}$ for all $c \in C^{\times}$ and
  $\varepsilon \in \mathbb{S}$ with $\varepsilon \prec 1$, it is equivalent
  that $fg^{\dag} \prec 1$ for all $g \in \mathbb{S}^{\prec}$, hence the
  result.
\end{proof}

\begin{remark}
  \label{rem-extension-of-scalars}Given a field extension $L / C$, there is a
  natural ``strong extension of scalars'' $\mathbb{S} \otimes^+_C L$ given as
  the direct limit of the system of fields $\left( L \left( \! (\Gamma_d)
  \! \right) \right)_{d \in D}$. The map $\partial$ extends uniquely into a
  regular derivation $\partial_L \of \mathbb{S} \otimes^+_C L \longrightarrow
  \mathbb{S} \otimes^+_C L$, and this extension preserves all relevant
  properties of $(\mathbb{S}, \preccurlyeq, \partial)$. Namely $(\mathbb{S}
  \otimes^+_C L, \preccurlyeq, \partial_L)$ is an H-asymptotic field with the
  same asymptotic couple as $(\mathbb{S}, \preccurlyeq, \partial)$. So our
  results apply without change to $\mathbb{S} \otimes^+_C L$.
\end{remark}

For $d \in D$, we write $\mathfrak{M}_d$ for the subset of series in
$\mathbb{S}_d$ whose support is a singleton $\{ \gamma \}, \gamma \in \Gamma$
and whose value at $\gamma$ is $1$. So $\mathfrak{M}_d$ is a subgroup of
$\mathbb{S}_d^{\times}$ and $v \of (\mathfrak{M}_d, \cdot, 1, \succ)
\longrightarrow (\Gamma_d, +, 0, <)$ is an isomorphism. Elements in
$\mathfrak{M}= \bigcup_{d \in D} \mathfrak{M}_d$ are called
{\tmem{monomials}}, and elements in $C\mathfrak{M} \subseteq \mathbb{S}$ are
called {\tmem{terms}}. Given $f \in \mathbb{S}^{\times}$, there is a unique
term $\tmop{lead} (f)$ called the {\tmem{leading term}} of $f$ such that $f
\sim \tmop{lead} (f)$. 

\begin{remark}\label{rem-ast-supp}
    If $f,g \in \mathbb{S}$ and $(f_i)_{i \in I}$ is a summable family in $\mathbb{S}$, then $\tmop{supp} fg$ is contained in $\tmop{supp} f+\tmop{supp} g$, and $\tmop{supp} \sum \limits_{i \in I} f_i$ is contained in $\bigcup \limits_{i \in I} \tmop{supp} f_i$ (see \cite[Section~3]{vdH:noeth}). It follows that for $f,g \in \tmop{Cont}(\partial)$ and for any submonoid $M \subseteq \Gamma$ with $\tmop{supp} f \cup \tmop{supp} g \subseteq M$ and $\partial(C(\!(M)\!) )\subseteq C(\!(M)\!)$, the support of $f\ast g$ is contained in $M$.
\end{remark}

We now define our notion of compatibility between asymptotic integration and the directed system. We will see that it is satisfied in most natural fields of generalised power series, such as transseries.

\begin{definition}
  \label{def-semi-regular}We say that {\tmem{{\tmstrong{asymptotic integration
  is regular on}}}} $\mathbb{S}$ if for all $d_0 \in D$, there is a $d \in D$
  with $d \geqslant d_0$ such that for each $f \in \mathbb{S}_{d_0}$ whose
  valuation is not the pseudo-gap of $(\mathbb{S}, \preccurlyeq, \partial)$,
  there is a term $\tau \in C\mathfrak{M}_d$ with $\tau' \sim f$ and $\tau' \in
  \mathbb{S}_{d_0}$. We then say that such a $d$ is associated to $d_0$. We say that $\mathbb{S}$ {\tmem{{\tmstrong{has regular
  asymptotic integration}}}} if it is closed under asymptotic integration and
  asymptotic integration is regular on $\mathbb{S}$.
\end{definition}

\section{Conjugacy of derivations}\label{section-conjugacy}

We first consider the following approximation of the conjugacy problem: given
$f, g \in \tmop{Cont} (\partial) \setminus \{ 0 \}$ with $f \neq g$, when is
there a $\varphi \in \tmop{Cont} (\partial)$ such that $\varphi \ast g \ast (-
\varphi) - f \prec f - g$? We say that a $\varphi \in \tmop{Cont} (\partial)$
satisfying this is an {\tmem{asymptotic conjugating element}} for $(f, g)$,
and we say that $f$ and $g$ are asymptotically conjugate if such an element
exists.

\begin{lemma}
  \label{lem-approximate-conjugacy}Let $f, g \in \tmop{Cont} (\partial)
  \setminus \{ 0 \}$ with $f \neq g$. Then $f$ and $g$ are asymptotically
  conjugate if and only if $f \sim g$ and $\frac{f - g}{g^2}$ has an
  asymptotic integral $A$ in $\mathbb{S}$ with $v (A) + v (g) + \Psi > 0$,
  their asymptotic conjugating elements are exactly the series $gA$ for such
  $A$.
\end{lemma}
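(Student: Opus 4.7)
The plan is to use \Cref{lem-comm12} to rewrite the asymptotic conjugacy condition $\varphi \ast g \ast (-\varphi) - f \prec f - g$ as a single asymptotic equivalence on the Lie bracket $\llbracket \varphi, g \rrbracket$, and then to recognise this equivalence as a first-order linear differential equation for the ``ratio'' $A \assign \varphi / g$.

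Concretely, I would first decompose
\[ \varphi \ast g \ast (-\varphi) - f \;=\; \bigl(\varphi \ast g \ast (-\varphi) - g\bigr) \;-\; (f - g) \]
and use \Cref{eq-lem-comm12-2} to write $\varphi \ast g \ast (-\varphi) - g = \llbracket \varphi, g \rrbracket + \eta$ with $\eta \prec \llbracket \varphi, g \rrbracket$. A short case check using the proof of \Cref{lem-comm12} rules out $\llbracket \varphi, g \rrbracket = 0$, since otherwise $\varphi \ast g \ast (-\varphi) - f = g - f \not\prec f - g$. Hence the conjugacy condition reduces to the sharp equivalence $\llbracket \varphi, g \rrbracket \sim f - g$.

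Now the quotient rule gives $\llbracket \varphi, g \rrbracket = \varphi g' - \varphi' g = -g^2 (\varphi / g)' = -g^2 A'$, so $\llbracket \varphi, g \rrbracket \sim f - g$ is the assertion that $A$ is (up to overall sign, which may be absorbed into $A$) an asymptotic integral of $(f - g)/g^2$. This parametrises the asymptotic conjugating elements as $\varphi = gA$ for such $A$. The two remaining conditions in the statement then drop out directly: from $\llbracket \varphi, g \rrbracket \sim f - g$ and the bound $\llbracket \varphi, g \rrbracket \prec g$ of \Cref{lem-Lie-cont} I extract $v(f - g) > v(g)$, i.e.\ $f \sim g$; and the constraint $\varphi = gA \in \tmop{Cont}(\partial)$ becomes, via \Cref{lem-cont-cond}, $v(gA) + \Psi > 0$, i.e.\ $v(A) + v(g) + \Psi > 0$. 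The converse direction is obtained by running these equivalences backwards: starting from $A$ satisfying the stated conditions I set $\varphi \assign gA$, verify $\varphi \in \tmop{Cont}(\partial)$ via \Cref{lem-cont-cond}, compute $\llbracket \varphi, g \rrbracket \sim f - g$ from the quotient rule, and conclude with \Cref{lem-comm12}.

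The main point to get right is the \emph{sharp} asymptotic equivalence $\llbracket \varphi, g \rrbracket \sim f - g$, as opposed to a mere $\asymp$-bound; without it the leading terms in $\varphi \ast g \ast (-\varphi) - f$ would reinforce rather than cancel, and the conjugacy inequality would fail. Once this sharp cancellation is arranged, all other steps are routine bookkeeping with the BCH formula, the quotient rule, and the characterisation of $\tmop{Cont}(\partial)$ from \Cref{lem-cont-cond}.
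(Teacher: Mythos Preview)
Your proposal is correct and essentially identical to the paper's proof: both reduce the asymptotic conjugacy condition via \Cref{lem-comm12} to the sharp equivalence $\llbracket \varphi, g \rrbracket \sim f - g$ and then solve this as a first-order linear equation for $A = \varphi/g$. The only cosmetic differences are that the paper writes the equation in the form $y' - g^{\dag} y \sim -(f-g)/g$ before substituting $y = gA$, and isolates the necessary condition $f \sim g$ up front via \Cref{eq-lem-comm12-1} rather than extracting it afterwards from \Cref{lem-Lie-cont} as you do.
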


\begin{proof}
  Let $d \in D$ with $f, g \in \mathbb{S}_d$. In view of
  \Cref{eq-lem-comm12-1}, a first necessary condition is that $f \sim g$.
  Suppose that $f \sim g$ and set $\delta \assign f - g$. We want to find a $y
  \in \tmop{Cont} (\partial)$ such that $y \ast g \ast (- y) - g - \delta
  \prec \delta$. Let $y \in \tmop{Cont} (\partial)$. By
  \Cref{eq-lem-comm12-2}, we have $y \ast g \ast (- y) - g \sim \llbracket y,
  g \rrbracket$. Thus $y$ is an asymptotic conjugating element for $(f, g)$ if
  and only if
  \[ y' - g^{\dag} y \sim \frac{- \delta}{g} . \]
  The solutions are of the form $gA$ where $A' \sim \frac{- \delta}{g^2}$.
  Thus $f$ and $g$ are asymptotically conjugate if and only if $- \delta /
  g^2$ has an asymptotic integral $A$ in $\mathbb{S}$ such that $gA$ lies in
  $\tmop{Cont} (\partial)$. We conclude with \Cref{lem-cont-cond}.
\end{proof}

\begin{lemma}
  \label{lem-exact}Suppose that asymptotic integration is regular on
  $\mathbb{S}$. Let $f, g \in \tmop{Cont} (\partial) \setminus \{ 0 \}$ be
  asymptotically conjugate. There are a $d \in D$, an ordinal $\lambda > 0$
  and a strictly $\prec$-decreasing sequence $(\tau_{\gamma})_{\gamma <
  \lambda}$ of terms in $\tmop{Cont} (\partial) \cap \mathbb{S}_d$ such that
  writing $\varphi_{\eta} \assign \sum_{\gamma < \eta} \tau_{\gamma}$ for all
  $\eta \leqslant \lambda$, the sequence $(\varphi_{\eta} \ast g \ast (-
  \varphi_{\eta}) - f)_{\gamma \leqslant \lambda}$ is strictly
  $\prec$-decreasing, and one of the following occurs:
  \begin{enumeratealpha}
    \item $\varphi_{\lambda} \ast g \ast (- \varphi_{\lambda}) = f$.
    
    \item $v \left( \frac{f - \varphi_{\lambda} \ast g \ast (-
    \varphi_{\lambda})}{g^2} \right)$ is the pseudo-gap of $(\mathbb{S},
    \preccurlyeq, \partial)$.
  \end{enumeratealpha}
\end{lemma}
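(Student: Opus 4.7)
The plan is to construct the sequence $(\tau_\gamma)_{\gamma < \lambda}$ by transfinite recursion, at each successor step adding a single term that strictly refines the current conjugator. First, pick $d_0 \in D$ with $f, g \in \mathbb{S}_{d_0}$ and, using regularity of asymptotic integration, enlarge $d_0$ to some $d \in D$ with the property that every $h \in \mathbb{S}_{d_0}$ whose valuation is not the pseudo-gap admits a term $\tau \in C\mathfrak{M} \cap \mathbb{S}_d$ with $\tau' \sim h$. Setting $\varphi_0 \assign 0$ and $\delta_\eta \assign f - \varphi_\eta \ast g \ast (- \varphi_\eta)$, the recursion maintains as invariants that $\varphi_\eta \in \tmop{Cont}(\partial) \cap \mathbb{S}_d$, that $(\delta_\gamma)_{\gamma \leqslant \eta}$ is strictly $\prec$-decreasing, and that $f \sim \varphi_\eta \ast g \ast (- \varphi_\eta)$, so that \Cref{lem-approximate-conjugacy} remains applicable at every stage.

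At a successor stage, if $\delta_\eta = 0$ I stop in case (a), and if $v(\delta_\eta/g^2)$ is the pseudo-gap I stop in case (b). Otherwise \Cref{eq-lem-comm12-1} gives $\varphi_\eta \ast g \ast (- \varphi_\eta) \sim g$, and the choice of $d$ combined with \Cref{lem-approximate-conjugacy} furnishes a term $A_\eta \in C\mathfrak{M} \cap \mathbb{S}_d$ with $A_\eta' \sim -\delta_\eta/g^2$ and such that $y_\eta \assign g A_\eta \in \tmop{Cont}(\partial)$ is an asymptotic conjugating element for $(f, \varphi_\eta \ast g \ast (- \varphi_\eta))$. I then take $\tau_\eta \assign \tmop{lead}(y_\eta)$, a term in $\tmop{Cont}(\partial) \cap \mathbb{S}_d$, and $\varphi_{\eta+1} \assign \varphi_\eta + \tau_\eta$. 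Strict decrease $\delta_{\eta+1} \prec \delta_\eta$ then reduces to comparing $\varphi_{\eta+1} \ast g \ast (- \varphi_{\eta+1})$ with the exact $\ast$-conjugate $y_\eta \ast (\varphi_\eta \ast g \ast (- \varphi_\eta)) \ast (- y_\eta)$: expanding both via \Cref{eq-BHC-cont} and invoking \Cref{lem-Lie-cont}, \Cref{lem-minus} and \Cref{cor-sum-BCH}, one bounds all nested commutators and the tail $y_\eta - \tau_\eta \prec \tau_\eta$ by terms strictly $\prec \delta_\eta$. Monotonicity of $\partial$ on $\Gamma \setminus \{0\}$ (\Cref{lem-monotone}) then forces $v(\tau_\eta) > v(\tau_\gamma)$ for all $\gamma < \eta$, because the valuations of $\delta_\gamma/g^2$ are strictly increasing.

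At a limit $\eta$, the family $(\tau_\gamma)_{\gamma < \eta}$ is summable in $\mathbb{S}_d$ since its supports are the singletons of a strictly increasing, hence well-ordered, sequence in $\Gamma_d$; thus $\varphi_\eta \in \mathbb{S}_d$ is well defined and lies in $\tmop{Cont}(\partial)$ by \Cref{lem-cont-cond} (as $v(\varphi_\eta) \geqslant v(\tau_0)$). For $\gamma < \eta$, \Cref{rem-valuation-sum} yields $\varphi_\eta - \varphi_\gamma \preccurlyeq \tau_\gamma$, and the same BCH estimate as in the successor step gives $\delta_\eta \prec \delta_\gamma$. Termination: the valuations $v(\tau_\gamma)$ form a strictly increasing ordinal-indexed family in the set $\Gamma_d$, so by Hartogs the recursion halts at some ordinal $\lambda$, at which one of (a), (b) must hold.

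The main obstacle will be the BCH bookkeeping in the successor step: rigorously justifying that substituting the ordinary sum $\varphi_\eta + \tau_\eta$ for the $\ast$-composite $y_\eta \ast \varphi_\eta$, and further replacing $y_\eta$ by its leading term $\tau_\eta$, does not spoil the strict improvement supplied by \Cref{lem-approximate-conjugacy}. This rests on the Lie-bracket estimates of \Cref{lem-Lie-cont} and \Cref{lem-minus}, which ensure that the discarded higher-order BCH corrections and the tail $y_\eta - \tau_\eta$ contribute only to valuations strictly exceeding $v(\delta_\eta)$.
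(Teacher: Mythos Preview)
Your proposal is correct and follows essentially the same approach as the paper: transfinite recursion using \Cref{lem-approximate-conjugacy} at successor steps, together with the BCH estimates (\Cref{lem-Lie-cont}, \Cref{cor-sum-BCH}) to pass from $\ast$-products to ordinary sums. The paper packages the bookkeeping slightly differently by maintaining the stronger invariant that $(\varphi_{\gamma+1} + \varepsilon) \ast g \ast (-(\varphi_{\gamma+1} + \varepsilon)) - f \prec \delta_\gamma$ for \emph{all} $\varepsilon \prec \tau_\gamma$, which then disposes of the limit case in one line (since $\varphi_\eta - \varphi_{\gamma+1} \prec \tau_\gamma$) rather than redoing the BCH estimate there.
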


\begin{proof}
  Let $d_0 \in D$ such that $f, g \in \mathbb{S}_{d_0}$, and let $d \geqslant
  d_0$ be as in the definition of regular asymptotic integration, with respect
  to $d_0$. If there is a $\mathfrak{u} \in \mathfrak{M}_d$ such that $v(\mathfrak{u}')$ is the pseudo-gap of $(\mathbb{S}_{d_0},\preccurlyeq,\partial)$, and $\mathfrak{u}' \in \mathbb{S}_{d_0}$, then we set $\mathfrak{m}\assign \mathfrak{u}$. If not, we set $\mathfrak{m}\assign 1$. By induction on an ordinal $\alpha$, we construct a strictly
  $\prec$-decreasing sequence $(\tau_{\gamma})_{\gamma < \alpha}$ of
  terms in $\tmop{Cont} (\partial) \cap (C(\! (\Gamma_{d_0} +\mathbb{N} v(\mathfrak{m}))\!) )$ such that defining $\varphi_{\eta},\eta\leqslant \alpha$ as above, the sequence
  $(\varphi_{\eta} \ast g \ast (- \varphi_{\eta}) - f)_{\eta \leqslant
  \alpha}$ is strictly $\prec$-decreasing and that for all $\gamma < \alpha$
  and all $\varepsilon \prec \tau_{\gamma}$, we have $(\varphi_{\gamma + 1} +
  \varepsilon) \ast g \ast (- \varphi_{\gamma + 1} - \varepsilon) - f \prec
  \varphi_{\gamma} \ast g \ast (- \varphi_{\gamma}) - f$. The $\varepsilon$ here is a stand-in for $\varphi_{\alpha}-\varphi_{\eta}$ for various values of $\eta<\alpha$.

  Let $\alpha$ such
  that for all $\eta < \alpha$, the sequence $(\tau_{\gamma})_{\gamma < \eta}$
  is defined and satisfies the conditions. We first make a few observations. Let $\gamma\leqslant \alpha$ and write $g_{\gamma} \assign \varphi_{\gamma} \ast g \ast (-\varphi_{\gamma})$. We have $g\sim g_{\gamma}$ by (\ref{eq-lem-comm12-1}), so $\frac{f-g_{\gamma}}{g^2}$ and $\frac{f-g_{\gamma}}{g_{\gamma}^2}$ have the same symptotic integrals. Recall that $(f-g_{\rho})_{\rho\leqslant\alpha}$ is strictly $\prec$-decreasing. So given $\rho_1\leqslant\rho_2\leqslant \alpha$, if $\frac{f-g_{\rho_i}}{g^2}$ has an symptotic integral $A_i,i \in\{1,2\}$, then we have $v(A_1)+v(g_{\rho_1})+\Psi>0 \Longrightarrow v(A_2)+v(g_{\rho_2})+\Psi>0$. Thus, by Lemma~\ref{lem-approximate-conjugacy}, the series $f$ and $g_{\gamma}$ are asymptotically conjugate if and only if $v(\frac{f-g_{\gamma}}{g^2})$ is not the pseudo-gap of $(\mathbb{S},\preccurlyeq,\partial)$.
  
  Let us now give the inductive proof. If $\alpha$ is a limit, including the case when $\alpha=0$, then we need only check that we have $(\varphi_{\gamma+1}+\varepsilon) \ast g \ast (-\varphi_{\gamma+1}-\varepsilon) \prec g_{\alpha}-f$ for all $\gamma<\alpha$ and $\varepsilon\prec \tau_{\gamma}$. But this follows by induction, noting that $\varphi_{\eta} - \varphi_{\gamma + 1}
  \prec \tau_{\gamma}$ for all $\gamma < \eta < \alpha$. Suppose that $\alpha
  = \eta + 1$ is a successor ordinal. If $g_{\eta} = f$, then, setting $\lambda \assign \alpha$, we are done.
  Suppose that $g_{\eta} \neq f$. If $\mu
  \assign v \left( \frac{f - g_{\eta}}{g^2} \right)$ is the pseudo-gap of $(\mathbb{S},
  \preccurlyeq, \partial)$, then we set $\lambda \assign \alpha$ and we are
  done. 
  
  Suppose now that $\mu$ is not the pseudo-gap of $(\mathbb{S}, \preccurlyeq,
  \partial)$. Since $\mathfrak{m}' \in \mathbb{S}_{d_0}$, Remark~\ref{rem-ast-supp} applies, and the set $\tmop{Cont} (\partial) \cap (C(\!  (\Gamma_{d_0} +\mathbb{N} v(\mathfrak{m})) \!) )$ is closed under $\ast$. We see with Lemma~\ref{lem-careful-gaps} that $v(\frac{f - g_{\eta}}{g^2})$ is not a pseudo-gap in $(\mathbb{S}_d,\preccurlyeq,\partial)$, so by regularity of asymptotic integration, there is a term $\tau$ in $\mathbb{S}_d$ with $\tau' \in
  \mathbb{S}_d$ and $\tau' \sim - \frac{f - g_{\eta}}{g^2}$. Note that $\tau$ is unique, and in view of Lemma~\ref{lem-careful-gaps}, we have $\tau \in C^{\times}\mathfrak{M}_{d_0} \mathfrak{m}^{\mathbb{N}}$. So we may extend the sequence by setting $\tau_{\eta} \assign \tmop{lead} (g) \tau$, which by \Cref{lem-approximate-conjugacy} lies in $\tmop{Cont}(\partial)$. We have $\tau'\prec (\frac{\tau_{\gamma}}{\tmop{lead}(g)})'$ for all $\gamma<\eta$ since $(f-g_{\gamma})_{\gamma<\alpha}$ is strictly $\prec$-decreasing. We deduce with Lemma~\ref{lem-monotone} that $\tau\prec \frac{\tau_{\gamma}}{\tmop{lead}(g)}$, whence $\tau_{\eta}\prec \tau_{\gamma}$ for all $\gamma<\eta$.
  
  Thus $\varphi_{\alpha}$ is well-defined, and it remains to show that the conditions related to that sequence are satisfied. By
  \Cref{lem-approximate-conjugacy} with $(g_{\eta},f)$ in the role of $(f,g)$, we have
  \[ s \ast \varphi_{\eta} \ast g \ast (- \varphi_{\eta} \ast (- s)) - f
     \prec \varphi_{\eta} \ast g \ast (- \varphi_{\eta}) - f \]
  for all $s \in \mathbb{S}$ with $s \sim \tau_{\eta}$. We note with
  \Cref{cor-sum-BCH} that $\tau_{\eta} + \varphi_{\eta}$ is of the form $s
  \ast \varphi_{\eta}$ for $s \assign (\tau_{\eta} + \varphi_{\eta}) \ast (-
  \varphi_{\eta}) \sim \tau_{\eta}$, so we have
  \[ (\varepsilon + \tau_{\eta} + \varphi_{\eta}) \ast g \ast (-
     (\varphi_{\eta} + \tau_{\eta} + \varepsilon)) - f \prec \varphi_{\eta}
     \ast g \ast (- \varphi_{\eta}) - f \]
  for all $\varepsilon \in \mathbb{S}$ with $\varepsilon \prec \tau_{\eta}$,
  as claimed. The sequence $(\tau_{\gamma})_{\gamma < \alpha}$ is strictly
  $\prec$-decreasing, hence injective. Since $\mathbb{S}_d$ is a set, there is an ordinal $\lambda$ at which the
  process stops, i.e. one of the cases of the lemma occurs.
\end{proof}

Note that the recularity of asymptotic integration plays a crucial role in the previous proof in allowing us to remain in $\mathbb{S}_d$ within the induction, so that $\varphi_{\lambda}$ be well-defined.

\begin{proposition}
  \label{prop-with-Poincar�}Suppose that asymptotic integration is regular on
  $\mathbb{S}$. Let $f, g \in \tmop{Cont} (\partial) \setminus \{ 0 \}$ with
  $f \sim g$, and assume that $\mu \assign v \left( \frac{f - g}{g^2} \right)$
  satisfies $\mu > (- v (g) - \Psi)'$ and lies above any pseudo-gap of
  $(\mathbb{S}, \preccurlyeq, \partial)$. Then $f$ and $g$ are conjugate in
  $\tmop{Cont} (\partial)$.
\end{proposition}

\begin{proof}
  We may assume that $f \neq g$. Since $\mu$ lies above any pseudo-gap of
  $(\mathbb{S}, \preccurlyeq, \partial)$, there is an $\alpha \in \Gamma
  \setminus \{ 0 \}$ with~$\alpha' = \mu$. By \Cref{lem-monotone}, we have
  $\alpha + v (g) + \Psi > 0$, so $f$ and $g$ are asymptotically conjugate by
  \Cref{lem-approximate-conjugacy}. We thus have a sequence
  $(\varphi_{\eta})_{\eta \leqslant \lambda}$ as in \Cref{lem-exact} for $(f,
  g)$. Since the sequence $(f - \varphi_{\eta} \ast g \ast (-
  \varphi_{\eta}))_{\eta \leqslant \lambda}$ is strictly
  $\preccurlyeq$-decreasing , we have $v \left( \frac{f - \varphi_{\eta} \ast
  g \ast (- \varphi_{\eta})}{g^2} \right) > \mu$ for all $\eta \leqslant
  \lambda$, so the second case of \Cref{lem-exact} cannot occur. Therefore the
  first one does, i.e. $f$ and $g$ are conjugate.
\end{proof}

\begin{theorem}
  \label{th-main}Suppose that $(\mathbb{S}, \preccurlyeq, \partial)$ has
  regular asymptotic integration. Let $f, g \in \tmop{Cont} (\partial)
  \setminus \{ 0 \}$. Then $f$ and $g$ are conjugate in $\tmop{Cont}
  (\partial)$ if and only if $f \sim g$ and $\mu \assign v \left( \frac{f -
  g}{g^2} \right)$ satisfies $\mu > (- v (g) - \Psi)'$.
\end{theorem}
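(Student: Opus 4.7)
The plan is to deduce both directions from results already established earlier in the paper. For the reverse implication, I observe that having regular asymptotic integration entails in particular that $\mathbb{S}$ is closed under asymptotic integration, so no pseudo-gap exists in $(\mathbb{S}, \preccurlyeq, \partial)$; consequently, the clause ``$\mu$ lies above any pseudo-gap'' appearing among the hypotheses of \Cref{prop-with-Poincare} holds vacuously. Thus the hypotheses $f \sim g$ and $\mu > (-v(g) - \Psi)'$ feed directly into \Cref{prop-with-Poincare} and yield the conjugacy of $f$ and $g$ in $\tmop{Cont}(\partial)$.

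For the forward implication, I will assume that $\varphi \ast g \ast (-\varphi) = f$ for some $\varphi \in \tmop{Cont}(\partial)$. The case $f = g$ is trivial, since then $(f-g)/g^2 = 0$ and $\mu = + \infty$ dominates $(-v(g)-\Psi)'$ by convention, so I may assume $f \neq g$; in that case \Cref{eq-lem-comm12-1} immediately gives $f \sim g$. Moreover $\varphi$ itself witnesses that $f$ and $g$ are asymptotically conjugate, since $\varphi \ast g \ast (-\varphi) - f = 0 \prec f - g$. Applying \Cref{lem-approximate-conjugacy} then produces an $A \in \mathbb{S}$ with $A' \sim (f-g)/g^2$ and $v(A) + v(g) + \Psi > 0$, that is, $v(A) > -v(g) - \psi$ for every $\psi \in \Psi$. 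Since $A \neq 0$ (for otherwise $f = g$) and $v(A') = \mu$, the strict monotonicity of $\mathord{'}$ recorded in \Cref{lem-monotone} upgrades each of these inequalities into $\mu = (v(A))' > (-v(g) - \psi)'$, which is precisely the required statement $\mu > (-v(g) - \Psi)'$.

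The only real subtlety will be bookkeeping around the set-versus-element nature of the $\Psi$-comparisons (the expressions $-v(g)-\Psi$ and $(-v(g)-\Psi)'$ are sets, and the strict inequalities must be interpreted pointwise) together with dispatching the degenerate cases $f=g$ and $\varphi=0$. No further technical ingredient beyond \Cref{lem-approximate-conjugacy}, \Cref{prop-with-Poincare} and \Cref{lem-monotone} should be needed.
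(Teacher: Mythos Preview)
Your proposal is correct and follows essentially the same route as the paper: the forward direction via \Cref{lem-approximate-conjugacy} and \Cref{lem-monotone}, the converse via the absence of a pseudo-gap together with \Cref{prop-with-Poincare}. The only nitpick is that invoking \Cref{lem-monotone} requires $v(A)\in\Gamma\setminus\{0\}$ rather than merely $A\neq 0$, but this is harmless since one may subtract the constant term of $A$ without affecting $A'$ or the inequality $v(A)+v(g)+\Psi>0$.
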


\begin{proof}
  If $f$ and $g$ are conjugate, then they are asymptotically conjugate, so by
  \Cref{lem-approximate-conjugacy}, there is an $\alpha \in \Gamma \setminus
  \{ 0 \}$ with $\alpha' = \mu$ and $\alpha + v (g) + \Psi > 0$. We deduce by
  \Cref{lem-monotone} that $\alpha' > (- v (g) - \Psi)'$. Conversely, suppose
  that $\mu > (- v (g) - \Psi)'$ and that $f \sim g$. By asymptotic
  integration, we find an $\alpha \in \Gamma$ with $\alpha' = \mu$, whence
  $\alpha + v (g) + \Psi > 0$ by \Cref{lem-monotone}. Since $(\mathbb{S},
  \preccurlyeq, \partial)$ has no pseudo-gap, we conclude with
  \Cref{prop-with-Poincar�}.
\end{proof}

\begin{corollary}
  \label{cor-initial}Suppose that $(\mathbb{S}, \preccurlyeq, \partial)$ has
  regular asymptotic integration. Then given $f \in \tmop{Cont} (\partial)$,
  the set of series $\varepsilon \in \mathbb{S}$ such that $f + \varepsilon$
  is a conjugate of $f$ in $\tmop{Cont} (\partial)$ is downward closed for
  $\preccurlyeq$.
\end{corollary}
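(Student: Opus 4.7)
The plan is to reduce \Cref{cor-initial} to a direct application of \Cref{th-main}. The key observation is that, once we set $g \assign f + \varepsilon$, the conjugacy condition of \Cref{th-main} becomes a pair of strict inequalities on $v(\varepsilon)$ alone, hence is monotone in $\varepsilon$ with respect to $\preccurlyeq$.

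First I would unpack \Cref{th-main} in this special case. If $f + \varepsilon$ is conjugate to $f$, then $f \sim f + \varepsilon$ forces $\varepsilon \prec f$, and in particular $v(f + \varepsilon) = v(f)$. Hence $\mu = v((f - g)/g^2)$ simplifies to $v(\varepsilon) - 2 v(f)$, and $-v(g) - \Psi$ simplifies to $-v(f) - \Psi$. \Cref{th-main} then says that $f + \varepsilon$ is conjugate to $f$ if and only if $v(\varepsilon) > v(f)$ and $v(\varepsilon) - 2 v(f) > \gamma'$ for every $\gamma \in -v(f) - \Psi$. Both requirements depend on $\varepsilon$ only through $v(\varepsilon)$, and each takes the form ``$v(\varepsilon)$ strictly exceeds a fixed element of $\Gamma$''.

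Second, given such an $\varepsilon$ in the set and any $\delta \in \mathbb{S}$ with $\delta \preccurlyeq \varepsilon$, I would handle $\delta = 0$ trivially (since $f$ is its own conjugate via the identity element $0 \in \tmop{Cont}(\partial)$), and for $\delta \neq 0$ observe that $v(\delta) \geqslant v(\varepsilon)$, so both strict inequalities above remain valid with $\delta$ in place of $\varepsilon$. In particular $\delta \prec f$, whence $v(f + \delta) = v(f)$, and \Cref{lem-cont-cond} yields $f + \delta \in \tmop{Cont}(\partial)$. The converse direction of \Cref{th-main} then gives that $f + \delta$ is conjugate to $f$.

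There is no genuine obstacle: the corollary is a mechanical re-reading of \Cref{th-main} exploiting the fact that a strict inequality of the form $v(\varepsilon) > \lambda$ only becomes stronger when $\varepsilon$ is replaced by a $\preccurlyeq$-smaller element. The only bookkeeping is to verify that substituting $\delta$ for $\varepsilon$ does not disturb the valuations we use (most importantly $v(f + \delta) = v(f)$), which is immediate from $\delta \prec f$.
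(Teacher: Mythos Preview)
Your proposal is correct and matches the paper's intent: the corollary is stated without proof precisely because it is an immediate consequence of \Cref{th-main}, and you have correctly unpacked why---the conjugacy criterion reduces to strict lower bounds on $v(\varepsilon)$ that only become easier to satisfy when $\varepsilon$ is replaced by something $\preccurlyeq$-smaller. Your bookkeeping (in particular $v(f+\delta)=v(f)$ and $f+\delta\in\tmop{Cont}(\partial)$) is sound.
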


We will see in \Cref{subsection-resonnance} that the conclusion of
\Cref{cor-initial} does not hold in the presence of pseudo-gaps. We conclude
with a more standard formulation of the conjugacy problem for derivations.

\begin{theorem}
  \label{th-main-standard}Suppose that $(\mathbb{S}, \preccurlyeq, \partial)$
  has regular asymptotic integration. For $f, g \in \tmop{Cont} (\partial)
  \setminus \{ 0 \}$ such that $f \sim g$ and $v \left( \frac{f - g}{g^2}
  \right) > (- v (g) - \Psi)'$, the derivations $f \partial$ and $g \partial$
  are conjugate over $\tmop{Aut} (\mathbb{S})$, i.e. there is a $\sigma = \exp
  (h \partial) \in \tmop{Aut} (\mathbb{S})$ with $\sigma \circ (g \partial)
  \circ \sigma^{\tmop{inv}} = f \partial$.
\end{theorem}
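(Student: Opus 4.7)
The plan is to deduce this statement from \Cref{th-main} by invoking the Lie-type correspondence given by $\exp$. Under the hypothesis that $f \sim g$ and $v((f - g)/g^2) > (- v(g) - \Psi)'$, \Cref{th-main} directly produces an $h \in \tmop{Cont}(\partial)$ with $h \ast g \ast (-h) = f$ in the group $(\tmop{Cont}(\partial), \ast)$.

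Next, I would transfer this identity to the group of derivations. By construction, the map $h \mapsto h \partial$ identifies $(\tmop{Cont}(\partial), \ast)$ with a subgroup of $(\tmop{Der}^{\mathcal{S}}_{\prec}(\mathbb{S}), \ast)$ carrying the same BCH operation, so $(h \partial) \ast (g \partial) \ast (-(h \partial)) = f \partial$ holds in $\tmop{Der}^{\mathcal{S}}_{\prec}(\mathbb{S})$. Set $\sigma \assign \exp(h \partial)$, which lies in $1\text{-}\tmop{Aut}^{\mathcal{S}}(\mathbb{S}) \subseteq \tmop{Aut}(\mathbb{S})$ by the proposition preceding \Cref{rem-valuation-sum}. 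Since $\exp$ is a group isomorphism from $(\tmop{Der}^{\mathcal{S}}_{\prec}(\mathbb{S}), \ast)$ to $(1\text{-}\tmop{Aut}^{\mathcal{S}}(\mathbb{S}), \circ)$, the inverse of $\sigma$ is $\sigma^{\tmop{inv}} = \exp(- h \partial)$.

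The remaining step, and the main technical hurdle, is to verify the operator identity
\[ \sigma \circ (g \partial) \circ \sigma^{\tmop{inv}} \; = \; \exp(\llbracket h \partial, \,\cdot\, \rrbracket)(g \partial), \]
from which one reads off $\sigma \circ (g \partial) \circ \sigma^{\tmop{inv}} = (h \partial) \ast (g \partial) \ast (-(h \partial)) = f \partial$, using the standard Lie-algebraic fact that $X \ast Y \ast (-X) = \exp(\llbracket X, \,\cdot\,\rrbracket)(Y)$ whenever the relevant BCH series converge. The identity itself is the associative-algebra formula $e^X Y e^{-X} = \sum_{n \geqslant 0} \frac{1}{n!} (\tmop{ad} X)^n(Y)$, which follows from the commutator expansion $(\tmop{ad} X)^n(Y) = \sum_{k=0}^n \binom{n}{k} (-1)^k X^{n - k} Y X^k$ after rearrangement of infinite sums. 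The justification of this rearrangement in the present setting is where care is needed: since $X = h \partial$ is contracting and regular, the families $\bigl(\tfrac{1}{i! j!} X^i (g \partial) (-X)^j \bigr)_{i,j}$ are summable in the summability algebra $\tmop{Lin}^{\mathcal{S}}(\mathbb{S})$, so the formal identity holds termwise and the regrouping is valid by the general framework of~\cite{BKKMP:strong}.

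Concatenating the three steps gives a $\sigma = \exp(h \partial) \in \tmop{Aut}(\mathbb{S})$ with $\sigma \circ (g \partial) \circ \sigma^{\tmop{inv}} = f \partial$, as required. The only non-routine point is the summability justification of the operator-level adjoint identity; all other ingredients are either direct invocations of \Cref{th-main} and the group isomorphism $\exp$, or formal algebraic manipulations within $(\tmop{Der}^{\mathcal{S}}_{\prec}(\mathbb{S}), \ast)$.
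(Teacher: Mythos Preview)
Your proposal is correct and follows essentially the same route as the paper: invoke \Cref{th-main} to obtain $h$, set $\sigma = \exp(h\partial)$, and then pass from conjugacy in $(\tmop{Cont}(\partial),\ast)$ to conjugacy of the derivations. The only cosmetic difference is in this last passage: the paper observes that conjugation by $\sigma$ is a strongly linear algebra automorphism of $\tmop{Der}^{\mathcal{S}}_{\prec}(\mathbb{S})$ (by {\cite[Proposition~1.28]{BKKMP:strong}}) and hence commutes with $\log$, giving $f\partial = \log(\sigma\circ\exp(g\partial)\circ\sigma^{\tmop{inv}}) = \sigma\circ(g\partial)\circ\sigma^{\tmop{inv}}$ in one line, whereas you unpack the same fact via the adjoint identity $e^X Y e^{-X} = \exp(\tmop{ad}_X)(Y)$ and its BCH counterpart---both arguments rest on the same summability framework from~\cite{BKKMP:strong}.
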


\begin{proof}
  By \Cref{th-main}, there is an $h \in \tmop{Cont} (\partial)$ with $(- h)
  \ast g \ast h = f$. Set $\sigma \assign \exp (h \partial)$. Then $\sigma
  \circ \exp (g \partial) \circ \sigma^{\tmop{inv}} = \exp (f \partial)$. Now
  the conjugation by $\sigma$ is a strongly linear algebra automorphism of
  $\tmop{Der}^{\mathcal{S}}_{\prec} (\mathbb{S})$ (see
  {\cite[Proposition~1.28]{BKKMP:strong}}), while $\log$ is given as a power series of its argument with constant coefficients. So $f \partial = \log (\sigma
  \circ \exp (g \partial) \circ \sigma^{\tmop{inv}}) = \sigma \circ \log (\exp
  (g \partial)) \circ \sigma^{\tmop{inv}} = \sigma \circ (g \partial) \circ
  \sigma^{\tmop{inv}}$.
\end{proof}

\section{Conjugacy of formal series}

We now state the results in \Cref{section-conjugacy} in terms of conjugacy of
series under composition.

\subsection{Transseries}

Let $\mathbb{T}$ denote the ordered field of logarithmic-exponential
transseries {\cite{Ec92,vdDMM01}} together with its standard derivation
$\partial \of f \mapsto f'$ and let $\mathfrak{M}$ denote the set of
transmonomials in $\mathbb{T}$, i.e. $\mathfrak{M}$ is a specific section of
the valuation $v \of \mathbb{T} \longrightarrow \Gamma \cup \{ \infty \}$. We
identify each value $\gamma \in v (\mathbb{T}^{\times})$ with the
corresponding transmonomial $\mathfrak{m} \in \mathfrak{M}$ with $v
(\mathfrak{m}) = \gamma$. Recall that $\mathbb{T}$ is a direct limit of Hahn
fields $\mathbb{T}_{m, n}, m, n \in \mathbb{N}$ where $\mathbb{T}_{m, n}
=\mathbb{R} \left( \! (\mathfrak{M}_{m, n}) \! \right)$ is the field of
transseries with exponential depth $\leqslant m$ and logarithmic depth
$\leqslant n$ (see {\cite{Ec92,Edgar:begin}}). The directed system $(\mathfrak{M}_{m,n})_{m,n \in \mathbb{N}}$ has inclusions $\mathfrak{M}_{m,n} \longrightarrow \mathfrak{M}_{m',n'}$ whenever $n'\geqslant n$ and $m'\geqslant m+(n'-n)$ (see \cite[(2.7)]{vdDMM01}). The derivation $\partial$ is
regular {\cite[Section~3]{vdDMM01}}. The identity series is denoted $x$, and
we have $x' = 1$, so $x^{\dag} = x^{-1}$. Note that contrary to \cite{PeResRoSer:linear,PeResRoSer:normal,Peran:parabolic,Peran:hyperbolic}, we take $x$ to be positive infinite as opposed to positive infinitesimal. There is a bijective morphism $\log \of (\mathbb{T}^{>},
\cdot, 1, <) \longrightarrow (\mathbb{T}, +, 0, <)$ where $\mathbb{T}^{>} = \{
f \in \mathbb{T} \suchthat f > 0 \}$. Given $n \in \mathbb{N}$, we denote
by $\log_n x$ the $n$-th iterate of $\log$ applied at~$x$. We write $\mathbb{T}_{\log} \assign \bigcup \limits_{ n \in \mathbb{N}} \mathbb{T}_{0,n}$ for the field of logarithmic transseries {\cite{Gre:phd}}.

Set
$\mathbb{T}^{>\mathbb{R}} \assign \{ f \in \mathbb{T} \suchthat f >\mathbb{R}
\}$. We recall that $\mathbb{T}$ is equipped with a formal composition law
$\mathord{\circ} \of \mathbb{T} \times \mathbb{T}^{>\mathbb{R}}
\longrightarrow \mathbb{T}$ (see {\cite[Section~6]{vdDMM01}}). We write
$\check{\circ}$ for the inverse law on $\mathbb{T}^{>\mathbb{R}}$, given by $f \:
\check{\circ} \: g \assign g \circ f$ for all $f, g \in
\mathbb{T}^{>\mathbb{R}}$.

Let $\mathbb{S}$ denote the subset of $\mathbb{T}$ of flat transseries, i.e.
series $f \in \mathbb{T}$ with $\mathfrak{m}^{\dag} \preccurlyeq x^{- 1}$ for
all $\mathfrak{m} \in \tmop{supp} f$. So $\mathbb{S}= \{ f \in \mathbb{T}
\suchthat \tmop{supp} f \subseteq \mathfrak{M}_{\precpreceq x} \}$ where
$\mathfrak{M}_{\precpreceq x} = \{ \mathfrak{m} \in \mathfrak{M} \suchthat
\mathfrak{m}^{\dag} \preccurlyeq x^{- 1} \}$. Note that
$\mathfrak{M}_{\precpreceq}$ is a subgroup of $\mathfrak{M}$, so $\mathbb{S}$
is a subfield of $\mathbb{T}$. We write $\mathbb{S}^{>\mathbb{R}} \assign \mathbb{S}
\cap \mathbb{T}^{>\mathbb{R}}$ and $\mathbb{S}^> \assign \mathbb{S}
\cap \mathbb{T}^>$.

\begin{example}
    The transmonomials $(\log_2 x)^{-5}$, $x^2$ and $\operatorname{e}^{\sqrt{\log x}}$ are flat, whereas $\operatorname{e}^{(\log x)^2}$ and $\operatorname{e}^x$ are not.
\end{example}

\begin{lemma}
    $\mathbb{S}$ is a differential subfield of $\mathbb{T}$
\end{lemma}

\begin{proof}
    It suffices to show that $\partial (\mathfrak{M}_{\precpreceq
x}) \subseteq \mathbb{S}$. We prove by induction on $m \in \mathbb{N}$ with $\mathfrak{m} \in \bigcup \limits_{n \in \mathbb{N}} \mathbb{T}_{m,n}$ that $\mathfrak{m} \in \mathfrak{M}_{\precpreceq
x} \Rightarrow \mathfrak{m}' \in \mathbb{S}$. This is clear if $m=0$ since $\mathbb{T}_{\log}' \subseteq \mathbb{T}_{\log}$. If $\mathfrak{m} \in \mathbb{T}_{m+1,n} \cap \mathfrak{M}_{\precpreceq
x} $ and the result holds at $m$, then we have $\tmop{supp} \mathfrak{m}' \subseteq \mathfrak{m} \tmop{supp} \log(\mathfrak{m})'$ where $\log(\mathfrak{m}) \in \mathbb{T}_{m,n+1}$. We conclude by induction.\end{proof}

\begin{lemma}
    $\mathbb{S}$ is closed under composition.
\end{lemma}

\begin{proof}
    Let $f \in \mathbb{S}$ with $f>\mathbb{R}$. It suffices to show that $\mathfrak{M}_{\precpreceq
x} \circ f \subseteq \mathbb{S}$. We prove by induction on $m \in \mathbb{N}$ with $\mathfrak{m} \in \bigcup \limits_{n \in \mathbb{N}} \mathbb{T}_{m,n}$ that $\mathfrak{m} \in \mathfrak{M}_{\precpreceq
x} \Rightarrow \mathfrak{m} \circ f \in \mathbb{S}$. The group $\mathfrak{M}_{\precpreceq
x}$ is closed under taking real powers, whence by \cite[6.8]{vdDMM01} so is $\mathbb{S}$. Moreover $\mathbb{S}$ is closed under $\log$. So the result holds if $m=0$. Suppose that $\mathfrak{m} \in \mathbb{T}_{m+1,n} \cap \mathfrak{M}_{\precpreceq
x}$ and that the result holds at $m$. Considering $\mathfrak{m}^{-1}$ if necessary, we may assume that $\mathfrak{m}$ is infinite. We have
\[\mathfrak{m} \circ f = \exp(\log(\mathfrak{m}) \circ f).\]

By construction of $\exp$ on $\mathbb{T}$ (see \cite[1.5--2.7]{vdDMM01}), each element $\mathfrak{n}$ in $\tmop{supp} \mathfrak{m} \circ f$ has the form $\mathfrak{n} = \mathfrak{u} \mathfrak{n}_1 \cdots \mathfrak{n}_k$ where $\mathfrak{u}$ is the largest monomial of $\exp(\log(\mathfrak{m}) \circ f)$, $k \in \mathbb{N}$, and $\mathfrak{n}_1,\dots,\mathfrak{n}_k$ are in $\tmop{supp} \log(\mathfrak{m}) \circ f$. As $\log \mathfrak{m} \in \mathbb{T}_{m,n+1}$, the induction hypothesis gives $\mathfrak{n}_1 \cdots \mathfrak{n}_k \in \mathbb{S}$. Lastly, note that $\mathfrak{u}^{\dag} \asymp (\mathfrak{m} \circ f)^{\dag}$. The chain rule \cite[Proposition~6.3]{vdDMM01} gives $(\mathfrak{m} \circ f)^{\dag} = f' \mathfrak{m}^{\dag} \circ f$. But $\mathfrak{m}^{\dag} \preccurlyeq x^{-1}$ and the composotion by $f$ on the right is valuation preserving, so $(\mathfrak{m} \circ f)^{\dag} \preccurlyeq f^{\dag} \preccurlyeq x^{-1}$. So $\mathfrak{u} \in \mathbb{S}$, whence $\mathfrak{m} \circ f \in \mathbb{S}$.
 We conclude by induction.
\end{proof}

We have a directed system $ \mathcal{S}\assign(\mathfrak{M}_{m,n} \cap \mathfrak{M}_{\precpreceq x})_{m,n \in \mathbb{N}}$ and we write $\mathbb{S}_{m,n} = \mathbb{R}(\! (\mathfrak{M}_{m,n} \cap \mathfrak{M}_{\precpreceq x})\!) = \mathbb{S} \cap \mathbb{T}_{m,n}$ for all $m,n \in \mathbb{N}$. We now focus on the directed system $\mathcal{S}$, H-field $(\mathbb{S}, \preccurlyeq, \partial)$, and the group
$(\tmop{Cont} (\partial), \ast, 0)$.

\begin{lemma}
  \label{lem-semi-regular}The direct limits $\mathbb{S}$, $\mathbb{T}$ and $\mathbb{T}_{\log}$ have regular asymptotic
  integration.
\end{lemma}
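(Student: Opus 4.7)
The plan is to verify both conditions in \Cref{def-semi-regular}: (i) that $(\mathbb{S}, \preccurlyeq, \partial)$ has no pseudo-gap, hence is closed under asymptotic integration; and (ii) that for each $d \in \mathbb{N}^2$ there is a $d_1 \geqslant d$ such that integration respects the stratification $\mathbb{S}_d$ in the required sense.

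For (i), I identify $\Psi$. The flatness condition gives, for every $\mathfrak{m} \in \mathfrak{M}_{\precpreceq x}$ with $\mathfrak{m} \prec 1$, that $\mathfrak{m}^{\dagger} \preccurlyeq x^{-1}$, so $\Psi \subseteq \{\gamma \in \Gamma : \gamma \geqslant v(x^{-1})\}$; the bound is attained at $\mathfrak{m} = x^{-1}$, giving $\max \Psi = v(x^{-1})$. By \cite[Theorem~9.2.1]{vdH:mt}, any pseudo-gap of $\mathbb{S}$ would equal this maximum. But $v(x^{-1}) = v((\log x)')$ lies in $\Gamma'$, hence cannot be a pseudo-gap; therefore $\mathbb{S}$ has no pseudo-gap and is closed under asymptotic integration.

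For (ii), fix $d = (m, n)$ and take $d_1 = (m, n+1)$. Given $f \in \mathbb{S}_{m,n} \setminus \{0\}$ with leading term $c_0 \mathfrak{m}_0$, I construct a flat transmonomial $\mathfrak{n} \in \mathfrak{M}_{m, n+1}$ and $c \in C$ such that $\tau := c \mathfrak{n}$ satisfies $\tau' \sim f$ and $\tau' \in \mathbb{S}_{m, n}$. Decompose $\mathfrak{m}_0 = x^{a_0} (\log x)^{a_1} \cdots (\log_n x)^{a_n} e^{\phi}$ with $\phi' \prec 1/(x \log x \cdots \log_n x)$, and let $k$ be the least index with $a_k \neq -1$ (set $k = n+1$ if no such index exists). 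If $k \leqslant n$, take $\mathfrak{n} = x \log x \cdots \log_k x \cdot \mathfrak{m}_0 \in \mathfrak{M}_{m, n}$: the cancellations in $\mathfrak{n}^{\dagger}$ yield a leading term $(1+a_k)/(x \log x \cdots \log_k x)$, so $\mathfrak{n}' \sim (1+a_k) \mathfrak{m}_0$, and we put $c = c_0/(1+a_k)$. If $k = n+1$, i.e.\ $\mathfrak{m}_0 = 1/(x \log x \cdots \log_n x) \cdot e^{\phi}$, take $\mathfrak{n} = \log_{n+1} x \in \mathfrak{M}_{0, n+1}$, so $\mathfrak{n}' = 1/(x \log x \cdots \log_n x) \in \mathbb{S}_{0, n}$, and scale by $c_0$.

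The main obstacle is handling the exponential correction $e^{\phi}$ uniformly across the two cases. In each case $e^{\phi}$ only contributes lower-order terms because $\phi' \prec 1/(x \log x \cdots \log_n x)$, so the leading-order analysis is unaffected. The stability $\tau' \in \mathbb{S}_{m, n}$ holds because for $k \leqslant n$, $\mathfrak{n} \in \mathfrak{M}_{m, n}$ and so $\mathfrak{n}' \in \mathbb{S}_{m, n}$ by regularity of $\partial$; while for $k = n+1$, differentiation of $\log_{n+1} x$ drops one level of log depth, landing in $\mathbb{S}_{0, n} \subseteq \mathbb{S}_{m, n}$.
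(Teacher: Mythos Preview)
There are genuine gaps in both parts.

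For (i): from $\Psi \subseteq \{\gamma : \gamma \geqslant v(x^{-1})\}$ together with $v(x^{-1}) \in \Psi$ you can only conclude $\min \Psi = v(x^{-1})$, not $\max \Psi = v(x^{-1})$ (compare \Cref{lem-transseries-contractive-hull}). In fact $\Psi$ has \emph{no} maximum in $\mathbb{S}$: the elements $v\bigl((\log_k x)^{\dagger}\bigr) = v\bigl(1/(x \log x \cdots \log_k x)\bigr)$ for $k \in \mathbb{N}$ are strictly increasing and all lie in $\Psi$. So the candidate pseudo-gap is not $v(x^{-1})$, and you have not ruled out the existence of a gap strictly above $\Psi$ and strictly below $(\Gamma_{>0})'$. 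The paper argues level by level instead: each $\mathbb{S}_{m,n}$ has pseudo-gap $\gamma_n = v\bigl((\log_{n+1} x)'\bigr)$, and since $\log_{n+1} x \in \mathbb{S}_{m,n+1}$ this value lies in the image of $'$ at the next stage, so the direct limit has no pseudo-gap.

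For (ii): the assertion $\phi' \prec 1/(x \log x \cdots \log_n x)$ in your decomposition is unjustified and fails once $m \geqslant 1$. That bound on $\phi'$ with $\phi$ purely infinite would force $\phi \prec \log_{n+1} x$, which is impossible for nonzero $\phi$ of logarithmic depth $\leqslant n$; so your hypothesis is tantamount to $\phi = 0$, i.e.\ the purely logarithmic case $m = 0$. A concrete failure: with $(m,n) = (1,1)$ take the flat monomial $\mathfrak{m}_0 = x^{-1} e^{\sqrt{\log x}} \in \mathfrak{M}_{1,1} \cap \mathfrak{M}_{\precpreceq x}$. Here $a_0 = -1$, $a_1 = 0$, $\phi = \sqrt{\log x}$, so your $k = 1$ and $\mathfrak{n} = x\log x \cdot \mathfrak{m}_0 = (\log x)\, e^{\sqrt{\log x}}$. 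But $\mathfrak{n}^{\dagger} = \frac{1}{x\log x} + \frac{1}{2x\sqrt{\log x}} \sim \frac{1}{2x\sqrt{\log x}}$, giving $\mathfrak{n}' \sim \tfrac{1}{2}\sqrt{\log x}\,\mathfrak{m}_0 \not\sim \mathfrak{m}_0$; the promised cancellation does not occur. The paper avoids explicit monomial calculus altogether: knowing that the pseudo-gap of $\mathbb{S}_{m,n}$ is exactly $\gamma_n$, any $f \in \mathbb{S}_{m,n}$ with $v(f) \neq \gamma_n$ already has an asymptotic-integral term in $\mathbb{S}_{m,n}$, and the single remaining valuation $v(f) = \gamma_n$ is handled by $c\,\log_{n+1} x \in \mathbb{S}_{m,n+1}$.
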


\begin{proof}
We prove this for $\mathbb{S}$, the proofs in the other cases being identical. Let $(m, n) \in \mathbb{N}^2$ and $f \in \mathbb{S}_{m, n}$. The pseudo-gap
  of $\mathbb{S}_{m, n}$ is $\gamma_n \assign v ((\log_{n + 1} x)')$ where
  $\log_{n + 1} x \in \mathbb{S}_{m, n + 1} \setminus \mathbb{S}_{m, n}$. Thus
  if $v (f) \neq \gamma_n$, then $f$ has an asymptotic integral
  in~$\mathbb{S}_{m, n}$. If $v (f) = \gamma_n$, then $c \log_{n + 1} x$ is an
  asymptotic integral of $f$ in $\mathbb{S}_{m, n + 1}$ with derivative in
  $\mathbb{S}_{m, n}$, where $c \in C$ is the leading coefficient of $f$. We
  also deduce that there is no pseudo-gap in $\mathbb{S}$, so the conditions
  of \Cref{def-semi-regular} hold for $(d_0,d) = ((m,n),(m, n + 1))$.
\end{proof}

\begin{lemma}
  \label{lem-transseries-contractive-hull}We have $\tmop{Cont} (\partial) = \{
  f \in \mathbb{S} \suchthat f \prec x \}$.
\end{lemma}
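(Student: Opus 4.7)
The plan is to invoke Lemma~\ref{lem-cont-cond}, which characterises $\tmop{Cont}(\partial)$ as $\{f \in \mathbb{S} : v(f) + \Psi > 0\}$, and then to identify the minimum of $\Psi$ for the H-asymptotic field $(\mathbb{S}, \preccurlyeq, \partial)$. The lemma will follow once we show $\min \Psi = -v(x)$: indeed the condition $v(f) + \Psi > 0$ becomes $v(f) > v(x)$, which is exactly $f \prec x$.

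For the lower bound on $\Psi$, I would simply unpack the definition of $\mathbb{S}$. Every positive $\gamma \in \Gamma$ equals $v(\mathfrak{m})$ for some infinitesimal transmonomial $\mathfrak{m} \in \mathfrak{M}_{\precpreceq x}$, and the defining condition of $\mathfrak{M}_{\precpreceq x}$ reads precisely $\mathfrak{m}^\dag \preccurlyeq x^{-1}$, i.e.\ $\gamma^\dag = v(\mathfrak{m}^\dag) \geq v(x^{-1}) = -v(x)$. Hence $\Psi \geq -v(x)$.

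For the matching bound, I would exhibit $-v(x) \in \Psi$ by a single witness. Take $\mathfrak{m} = 1/x \in \mathfrak{M}_{\precpreceq x}$, which is infinitesimal since $x > \mathbb{R}$ forces $v(x) < 0$. Then $\mathfrak{m}^\dag = -1/x$, so $v(\mathfrak{m}^\dag) = -v(x)$, placing $-v(x)$ in $\Psi$. Combined with the previous step, this yields $\min \Psi = -v(x)$, and the equivalence $f \in \tmop{Cont}(\partial) \Longleftrightarrow v(f) > v(x) \Longleftrightarrow f \prec x$ follows.

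There is no real obstacle here: the entire argument is a translation between the flatness condition $\mathfrak{m}^\dag \preccurlyeq x^{-1}$ and the characterisation of $\Psi$. The only point that needs a moment of care is confirming that positive values in $v(\mathbb{S}^\times)$ are exhausted by valuations of infinitesimal transmonomials in $\mathfrak{M}_{\precpreceq x}$, which is immediate from the description $\mathbb{S} = \{f \in \mathbb{T} : \tmop{supp} f \subseteq \mathfrak{M}_{\precpreceq x}\}$.
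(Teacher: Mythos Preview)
Your proof is correct and follows essentially the same approach as the paper: both invoke Lemma~\ref{lem-cont-cond} and reduce to showing $\min\Psi = v(x^{-1})$ (your $-v(x)$), obtaining the lower bound from the flatness condition defining $\mathfrak{M}_{\precpreceq x}$ and the attainment from the witness $x^{-1}$. The paper's version is simply more terse, leaving the witness and the translation $v(f)+\Psi>0 \Leftrightarrow f\prec x$ implicit.
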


\begin{proof}
  We have $\Psi \geqslant v (x^{- 1})$ by definition of $\mathbb{S}$, whence
  $v (x^{- 1}) = \min \Psi$, hence the result by \Cref{lem-cont-cond}.
\end{proof}

Given $f \in \mathbb{S}^{>\mathbb{R}}$, the right composition with $f$ is the
map $\mathbb{S} \longrightarrow \mathbb{S} \: ; \: g \mapsto g \circ f$. We
say that a map $\sigma \of \mathbb{S} \longrightarrow \mathbb{S}$ satisfies a
chain rule if there is an $h \in \mathbb{T}$ such that for all $g \in
\mathbb{S}$, we have
\begin{equation}
  \partial (\sigma (g)) = h \sigma (\partial (g)) \label{eq-chain-rule}
\end{equation}
Right compositions satisfy chain rules {\cite[Proposition~6.3]{vdDMM01}}. We have a converse relation:

\begin{lemma}
  \label{lem-chain-rule-char}Let $\sigma \in 1 \text{-}
  \tmop{Aut}^{\mathcal{S}} (\mathbb{S})$ satisfy a chain rule. If $\sigma(\log s) -\log s\prec 1$ for all $s \in \mathbb{S}^>$, then $\sigma$
  is the right composition with $\sigma(x)$.
\end{lemma}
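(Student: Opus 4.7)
The natural candidate for the composand is $f \assign \sigma (x)$: since $\sigma \in 1 \text{-} \tmop{Aut}^{\mathcal{S}} (\mathbb{S})$, we have $\sigma (x) - x \prec x$, whence $f \sim x$ and in particular $f \in \mathbb{S}^{>\mathbb{R}}$. Applying the chain rule \Cref{eq-chain-rule} to $g = x$ identifies $h = \partial (\sigma (x)) = f'$, so the chain rule reads $\partial \circ \sigma = f'  (\sigma \circ \partial)$.

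I would then introduce the reference morphism $\tau \of \mathbb{S} \longrightarrow \mathbb{S} \: ; \: g \mapsto g \circ f$. This is well-defined by the discussion preceding the lemma ($\mathfrak{M}_{\precpreceq x} \circ f \subseteq \mathbb{S}$), it is a regular strongly linear $\mathbb{R}$-algebra morphism, and by \cite[Proposition~6.3]{vdDMM01} it satisfies the same chain rule $\partial \circ \tau = f'  (\tau \circ \partial)$. Moreover $\tau \in 1 \text{-} \tmop{Aut}^{\mathcal{S}} (\mathbb{S})$: for $\mathfrak{m} \in \mathfrak{M}_{\precpreceq x}$, a Taylor expansion of $\mathfrak{m} \circ (x + (f - x))$ together with $\mathfrak{m}^{\dag} (f - x) \preccurlyeq (f - x) / x \prec 1$ gives $\tau (\mathfrak{m}) \sim \mathfrak{m}$, and this extends to all of $\mathbb{S}^{\times}$ by strong linearity. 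The goal is $\sigma = \tau$, and by strong linearity it suffices to show $\sigma (\mathfrak{m}) = \tau (\mathfrak{m})$ for every $\mathfrak{m} \in \mathfrak{M}_{\precpreceq x}$.

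The crucial identity is the following. Since $\mathfrak{m}^{\dag} \in \mathbb{S}$ and $\mathfrak{m}' = \mathfrak{m}^{\dag} \mathfrak{m}$, the multiplicativity of $\sigma$ and $\tau$ combined with their chain rules yields $\sigma (\mathfrak{m})^{\dag} = f' \sigma (\mathfrak{m}^{\dag})$ and $\tau (\mathfrak{m})^{\dag} = f' \tau (\mathfrak{m}^{\dag})$, whence
\[
  \left( \frac{\sigma (\mathfrak{m})}{\tau (\mathfrak{m})} \right)^{\dag}
  \; = \; f'  \bigl( \sigma (\mathfrak{m}^{\dag}) - \tau (\mathfrak{m}^{\dag}) \bigr) .
\]
If $\sigma$ and $\tau$ agree on $\mathfrak{m}^{\dag}$, then $\sigma (\mathfrak{m}) / \tau (\mathfrak{m})$ has zero logarithmic derivative, so it lies in $\tmop{Ker} (\partial) =\mathbb{R}$; but $\sigma (\mathfrak{m}), \tau (\mathfrak{m}) \sim \mathfrak{m}$, so the only possible constant is $1$.

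The remaining task is an induction on $\mathfrak{m}$ along the relation ``$\mathfrak{n}$ appears in $\tmop{supp}  (\mathfrak{m}^{\dag})$'': by strong linearity, $\sigma (\mathfrak{m}^{\dag}) = \tau (\mathfrak{m}^{\dag})$ follows once we have $\sigma (\mathfrak{n}) = \tau (\mathfrak{n})$ for every $\mathfrak{n} \in \tmop{supp}  (\mathfrak{m}^{\dag})$. Writing $\mathfrak{m} = e^L$ with $L$ purely large, the monomials of $\tmop{supp} (L')$ have strictly lower exponential depth than $\mathfrak{m}$, which permits induction on the depth $(m, n)$ in the direct system $\mathbb{S} = \varinjlim \mathbb{S}_{m, n}$. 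The base case $m = 0$ of purely logarithmic $\mathfrak{m}$ is handled by a secondary induction on $n$, anchored by $\sigma (x) = f = \tau (x)$ and $\sigma (1 / x) = 1 / \sigma (x) = 1 / f = \tau (1 / x)$. The main obstacle is the bookkeeping of this well-founded induction across the exponential--logarithmic tower, requiring a precise control of how the operator $\mathord{\:^{\dag}}$ moves support monomials between the levels $\mathbb{S}_{m, n}$ and, in particular, a verification that every $\mathfrak{n} \in \tmop{supp}  (\mathfrak{m}^{\dag})$ indeed sits strictly lower in the chosen complexity measure.
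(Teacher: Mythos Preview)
Your setup is sound: defining $f = \sigma(x)$, introducing the reference right composition $\tau$, and deriving the identity $(\sigma(\mathfrak{m})/\tau(\mathfrak{m}))^{\dag} = f'(\sigma(\mathfrak{m}^{\dag}) - \tau(\mathfrak{m}^{\dag}))$ together with the observation that $\sigma(\mathfrak{m}) \sim \mathfrak{m} \sim \tau(\mathfrak{m})$ forces any constant ratio to equal $1$. The induction on exponential depth $m \geqslant 1$ also works, since by regularity of $\partial$ the logarithm $L = \log \mathfrak{m}$ lies in $\mathbb{S}_{m-1,n}$ and hence so does $L' = \mathfrak{m}^{\dag}$.

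The genuine gap is in the base case $m = 0$. For a purely logarithmic monomial $\mathfrak{m} = \prod_{i=0}^{n} \ell_i^{r_i}$ (with $\ell_i = \log_i x$), one computes $\mathfrak{m}^{\dag} = \sum_{i=0}^{n} r_i / (\ell_0 \cdots \ell_i)$, whose support contains $(\ell_0 \cdots \ell_n)^{-1} \in \mathfrak{M}_{0,n} \setminus \mathfrak{M}_{0,n-1}$. So the operation $\mathfrak{m} \mapsto \tmop{supp}(\mathfrak{m}^{\dag})$ does \emph{not} decrease $n$; worse, it has a self-loop at $\mathfrak{n}_n := (\ell_0 \cdots \ell_n)^{-1}$, since $\mathfrak{n}_n \in \tmop{supp}(\mathfrak{n}_n^{\dag})$. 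Thus your ``secondary induction on $n$'' is not well-founded with the complexity measure you propose, and this is not merely bookkeeping. If instead you try the additive route via $\ell_n' = (\ell_0 \cdots \ell_{n-1})^{-1} \in \mathfrak{M}_{0,n-1}$, you obtain $\sigma(\ell_n) - \tau(\ell_n) = c_n \in \mathbb{R}$, but only $c_n \prec \ell_n$, which does not force $c_n = 0$ since $\ell_n \succ 1$.

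The paper's proof circumvents this by a different, shorter argument. It shows directly that $\log(\sigma(s)) - \sigma(\log s)$ is a constant for every $s \in \mathbb{S}^{>\mathbb{R}}$ (an immediate computation from the chain rule and multiplicativity), and then forces that constant to be $0$ by first conjugating $\sigma$ by the right composition with $x + x^{-1}$ so as to arrange $\phi(\mathbb{S}) \subseteq \mathbb{S}^{\prec}$; this makes $\phi(\log s) \prec 1$, and since the other contribution $\log(s + \phi(s)) - \log s$ is also $\prec 1$, the constant is infinitesimal, hence zero. Once $\sigma$ commutes with $\log$, it is a right composition. In effect, the paper isolates exactly the missing ingredient in your base case --- killing the constants $c_n$ --- and handles it globally in one stroke rather than monomial by monomial.
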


\begin{proof}
  Consider the contracting map $\phi \assign \sigma - \tmop{Id}_{\mathbb{S}}$. Let $\mathfrak{m} \in \mathfrak{M}$ with $\mathfrak{m}\succ 1$ and let $\mathfrak{n} \in \tmop{supp} \log \mathfrak{m}$. We have $1 \prec \mathfrak{n}
  \prec \mathfrak{m}$ by construction of $\log$ on $\mathbb{T}$. It follows since $(\mathbb{T},\preccurlyeq,\partial)$ is an H-asymptotic field that $\mathfrak{n}^{\dag} \asymp (\mathfrak{n}^{-1})^{\dag} \preccurlyeq (\mathfrak{m}^{-1})^{\dag} \asymp \mathfrak{m}^{\dag}$. This entails that $\log (\mathfrak{M}_{\precpreceq x}) \subseteq
  \mathfrak{M}_{\precpreceq x}$, so $\log \mathbb{S}^> 
  \subseteq \mathbb{S}$. Consider an $s \in \mathbb{S}^{>\mathbb{R}}$. We have
  \begin{eqnarray*}
    \partial (\log (\sigma (s))) - \partial (\sigma (\log s)) & = &
    \frac{\partial (\sigma (s))}{\sigma (s)} - h \sigma (\partial (\log s))\\
    & = & h \left( \frac{\sigma (\partial (s))}{\sigma (s)} - \sigma \left(
    \frac{\partial (s)}{s} \right) \right)\\
    & = & 0.
  \end{eqnarray*}
  So $c \assign \log (\sigma (s)) - \sigma (\log s) \in \tmop{Ker} (\partial)
  = C$. Our hypothesis on $\sigma$ gives $c\prec 1$, whence $c=0$, i.e. $\log (\sigma (s)) = \sigma (\log s)$. An easy induction on the directed system $(\mathbb{S}_{m,n})_{m,n \in \mathbb{N}}$ shows that the right composition with $\sigma(x)$ is the only strongly linear automorphism of $\mathbb{S}$ which commutes with $\log$ and sends $x$ to $\sigma(x)$. So it must coincide with $\sigma$.
\end{proof}

\begin{lemma}
  \label{lem-partial-Taylor}If $g \in \tmop{Cont}(\partial)$, then $\exp(g \partial)$ is a right composition. Moreover, the derivation $\partial$ is contracting on
  $\mathbb{S}$ and $\exp (\partial)$ coincides with the right composition with
  $x + 1$.
\end{lemma}

\begin{proof}
Note that $\sigma\assign\exp(g \partial)$ commutes with $g \partial$. Thus 
\[\partial \circ \sigma = \frac{1}{g} (g \partial) \circ \sigma = \frac{1}{g}\sigma \circ (g \partial) =\frac{\sigma(g)}{g} \sigma \circ \partial.\] Therefore $\sigma$ satisfies a chain rule.

  Let $s \in \mathbb{S}^>$. We have $\sigma(\log s)-\log s \sim g (\log s)'$.
Since $g \in \tmop{Cont}(\partial)$, we have $g(\log s)'\prec (\log s)' = s^{\dag}$. But $s \in \mathbb{S}^{\times}$, so $s^{\dag}\preccurlyeq x^{-1}$, so $\sigma(\log s)-\log s \prec 1$. Applying the previous lemma, we see that $\sigma$ is a right composition.

  We know that $\partial$ is contracting by
  \Cref{lem-transseries-contractive-hull}. Moreover $\exp (\partial)$ commutes
  with $\partial$, so it satisfies a chain rule. Therefore $\exp (\partial)$
  is the right composition with $\exp (\partial) (x) = x + 1 + \partial (1) +
  \partial (\partial (1)) + \cdots = x + 1$.
\end{proof}

\begin{theorem}
  \label{th-corres-trans}The set $\mathcal{P} \assign \{ x + \delta \in
  \mathbb{S} \suchthat \delta \prec x \}$ is a group under composition, and
  the map
  \begin{eqnarray*}
    \mathcal{E} \of \tmop{Cont} (\partial) & \longrightarrow & \mathcal{P}\\
    f & \longmapsto & \exp (f \partial) (x)
  \end{eqnarray*}
  is a group isomorphism between $(\tmop{Cont} (\partial), \ast, 0)$ and
  $(\mathcal{P}, \check{\circ}, x)$.
\end{theorem}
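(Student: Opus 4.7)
The plan is to factor $\mathcal{E}$ as
\[
  \tmop{Cont}(\partial) \xrightarrow{\iota}
  \tmop{Cont}(\partial)\partial \subseteq \tmop{Der}^{\mathcal{S}}_{\prec}(\mathbb{S})
  \xrightarrow{\exp} 1\text{-}\tmop{Aut}^{\mathcal{S}}(\mathbb{S})
  \xrightarrow{\tmop{ev}_x} \mathbb{S},
\]
where $\iota(f) \assign f\partial$ and $\tmop{ev}_x(\sigma) \assign \sigma(x)$. The map $\iota$ is a bijection with inverse $D \mapsto D(x)$, and the direct computation $\llbracket f\partial, g\partial \rrbracket = \llbracket f, g\rrbracket\partial$ shows that $\iota$ carries $\ast$ on $\tmop{Cont}(\partial)$ to the BCH operation on $\tmop{Cont}(\partial)\partial$; the second arrow is a group isomorphism by the proposition of Section~1. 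For $f \in \tmop{Cont}(\partial)$, expanding $\mathcal{E}(f) = x + f + \sum_{n \geqslant 2} \tfrac{1}{n!}(f\partial)^n(x)$ and using that $f\partial$ is contracting yields $\mathcal{E}(f) - x \sim f \prec x$ by \Cref{lem-transseries-contractive-hull}, so $\mathcal{E}(\tmop{Cont}(\partial)) \subseteq \mathcal{P}$. To see that $\mathcal{P}$ is closed under $\check{\circ}$, for $a, b \in \mathcal{P}$ the series $b \circ a$ lies in $\mathbb{S}$ by the near-support results cited before \Cref{lem-semi-regular}, and a Taylor expansion combined with $a - x, b - x \prec x$ shows $b \circ a - x \prec x$; the group structure will follow once we have inverses, which come from the next step.

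The main step is to show that every $\sigma \assign \exp(f\partial)$ is a right composition; by \Cref{lem-chain-rule-char} this reduces to establishing a chain rule. I would argue that $\sigma\partial\sigma^{\tmop{inv}}$ is again a regular derivation on $\mathbb{S}$, and invoke the structural fact that every regular derivation on $\mathbb{S}$ has the form $k\partial$ for a unique $k \in \mathbb{S}$ (a rank-one property of the $\mathbb{S}$-module $\tmop{Der}^{\mathcal{S}}(\mathbb{S})$). Setting $h \assign \sigma\partial\sigma^{\tmop{inv}}(x) = \partial(\sigma(x))$, this yields $\sigma\partial\sigma^{\tmop{inv}} = h\partial$, which rearranges into the chain rule $\partial\sigma = h\,\sigma\partial$. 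Surjectivity of $\mathcal{E}$ onto $\mathcal{P}$ follows: for $x + \delta \in \mathcal{P}$, the right composition $\tau_\delta \of g \mapsto g \circ (x+\delta)$ lies in $1\text{-}\tmop{Aut}^{\mathcal{S}}(\mathbb{S})$, so $\tau_\delta = \exp(D)$ for a unique $D \in \tmop{Der}^{\mathcal{S}}_{\prec}(\mathbb{S})$; the rank-one fact forces $D = D(x)\partial$, whence $f \assign D(x) \in \tmop{Cont}(\partial)$ satisfies $\mathcal{E}(f) = x + \delta$.

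For the homomorphism property, $\iota$ and $\exp$ being group isomorphisms give $\exp((f \ast g)\partial) = \exp(f\partial) \circ \exp(g\partial)$; evaluating at $x$ and using that $\exp(f\partial)$ is the right composition with $\mathcal{E}(f)$ yields
\[
  \mathcal{E}(f \ast g) = \exp(f\partial)(\mathcal{E}(g))
  = \mathcal{E}(g) \circ \mathcal{E}(f)
  = \mathcal{E}(f) \,\check{\circ}\, \mathcal{E}(g).
\]
Injectivity of $\mathcal{E}$ follows from injectivity of $\iota$, $\exp$, and of $\tmop{ev}_x$ on right compositions (the latter since $\tau_a(x) = a$). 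The main obstacle is the chain rule step: my argument rests on the rank-one structure of $\tmop{Der}^{\mathcal{S}}(\mathbb{S})$ over $\mathbb{S}$, which is not among the results quoted earlier. Should that fact not be readily available in this setting, the alternative is a direct series computation checking $\partial\sigma(g) = (1 + \delta')\,\sigma\partial(g)$ with $\delta \assign \sigma(x) - x$ first on $g = x$ and then propagating through sums, products, and the strong generators $\log_n x$ and $\exp(\varphi)$ of $\mathbb{S}$, exploiting strong linearity of both $\sigma$ and $\partial$.
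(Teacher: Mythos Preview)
Your factorisation and the homomorphism computation are correct once each $\exp(f\partial)$ is known to be a right composition. The two substantive steps---the chain rule and surjectivity---both rest on the rank-one claim, and that is where the proposal breaks down.

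For the chain rule the rank-one route is unnecessary, and the displayed identity $\sigma\partial\sigma^{\tmop{inv}}(x)=\partial(\sigma(x))$ is wrong: the left-hand side equals $\sigma\bigl((\sigma^{\tmop{inv}}(x))'\bigr)$, not $(\sigma(x))'$. The paper's argument is immediate: $\sigma=\exp(f\partial)$ commutes with $f\partial$, so $\sigma(f\,g')=f\,(\sigma(g))'$ for every $g$; since $\sigma$ is multiplicative this rearranges to $(\sigma(g))'=\tfrac{\sigma(f)}{f}\,\sigma(g')$, a chain rule with $h=\sigma(f)/f$. No structural fact about $\tmop{Der}^{\mathcal{S}}(\mathbb{S})$ is needed, and \Cref{lem-chain-rule-char} then gives injectivity directly.

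Surjectivity is the genuine gap. The rank-one claim is false in this setting: define $D$ on monomials by $D(x)=0$, $D(\log x)=1$, $D(\log_i x)=0$ for $i\geqslant 2$, and extend by Leibniz and strong linearity. There is no algebraic relation in the field $\mathbb{S}$ linking the $\log_i x$, so this yields a well-defined regular contracting derivation with $D(x)=0$ but $D\neq 0$; hence $\tmop{Der}^{\mathcal{S}}_{\prec}(\mathbb{S})\neq\tmop{Cont}(\partial)\partial$ and the inference ``$D=D(x)\partial$'' fails. The paper proves surjectivity by an entirely different route: for $f\in\mathcal{P}$ it solves Abel's equation $V\circ f=V+1$ with $V'\succ x^{-1}$ via Edgar's theorem, uses growth-order-group arguments to force $V^{\dag}\preccurlyeq x^{-1}$ and hence $V\in\mathbb{S}$, and then, combining \Cref{lem-partial-Taylor} with the chain rule, checks that the contracting derivation $d=(1/V')\partial$ satisfies $\exp(d)(x)=V^{\tmop{inv}}\circ(x+1)\circ V=f$. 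Your fallback of propagating a chain rule through the generators $\log_n x$ and $\exp(\varphi)$ would only re-establish that $\exp(f\partial)$ is a right composition; it does not produce a preimage of a given $x+\delta\in\mathcal{P}$.
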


\begin{proof}
  That $\mathcal{E}$ ranges in $\mathcal{P}$ follows from the fact that $\exp
  (\tmop{Der}^{\mathcal{S}}_{\prec} (\mathbb{S})) \subseteq 1 \text{-}
  \tmop{Aut}^{\mathcal{S}} (\mathbb{S})$. Each $\exp (f \partial)$ for $f \in
  \tmop{Cont} (\partial)$ commutes with $f \partial$, so it satisfies a chain
  rule. By \Cref{lem-chain-rule-char}, this implies that $\mathcal{E}$ is
  injective. Let us show that it is a surjective morphism. Let $f \in
  \mathcal{P}$. Considering $f^{\tmop{inv}}$ if necessary, we may assume that
  $f > x$. By {\cite[Theorem~4.1]{Edgar18}}, there is a series $V \in
  \mathbb{T}^{>\mathbb{R}}$ with $V' \succ x^{- 1}$ and $V \circ f = V + 1$.
  We have
  \[ (V^{\tmop{inv}})^{\dag} = \frac{(V^{\tmop{inv}})'}{V^{\tmop{inv}}} =
     \left( \frac{1}{V' x} \right) \circ V^{\tmop{inv}} . \]
  Now $1 / (V' x) \prec 1$ so $(V^{\tmop{inv}})^{\dag} \prec 1$ by
  {\cite[Proposition~5.10]{vdH:ln}}.
  
  We claim that $V^{\dag} \preccurlyeq x^{- 1}$. Indeed, we have $f - x > x^{-
  n}$ for a certain $n > 1$. Note that $x^{n+1} \circ (x + x^{- n}) > x^{n+1} + 1$.
  The ordered group $(\mathbb{T}^{>\mathbb{R}}, \circ, x, <)$ is a growth
  order group with Archimedean centralisers as a consequence of
  {\cite[Theorem~4.7]{Bag:gog}}. So by the axiom $\textbf{GOG2}$ of
  {\cite[Section~2.1]{Bag:gog}}, for all $\varphi, \psi \in
  \mathbb{T}^{>\mathbb{R}}$ such that $\psi$ lies above all iterates of $x +
  x^{- n}$ and that $\varphi > \psi \circ \psi$, we have $\varphi \circ (x + x^{-
  n}) \circ \varphi^{\tmop{inv}} > \psi \circ (x + x^{- n}) \circ
  \psi^{\tmop{inv}}$. Here we apply this to $\psi = x^{n+1}$, and see that we must have $V \leqslant x^{2(n+1)}$. Thus $V^{\dag} \preccurlyeq
  x^{- 1}$. Now writing $V = V_0 + V_1$ where $V_0 \in \mathbb{S}$ and
  $V_1^{\dag} \succ x^{- 1}$, we have $V_0 \circ f + V_1 \circ f = V_0 + 1 +
  V_1 + 1$ where $V_0 \circ f - V_0 - 1 \in \mathbb{S}$, so we must have $V_1
  \circ f - V_1 = 0$, and thus we may assume that $V = V_0 \in \mathbb{S}$. By
  \Cref{lem-partial-Taylor}, we have
  \[ f = (V^{\tmop{inv}} \circ (x + 1)) \circ V = \sum_{i \in \mathbb{N}}
     \frac{(V^{\tmop{inv}})^{(n)} \circ V}{i!} . \]
  We have~$\frac{1}{V'} \prec x$, so the regular derivation $\mathd \assign
  \frac{1}{V'} \partial$ on $\mathbb{S}$ is contracting by
  \Cref{lem-transseries-contractive-hull}. An easy induction using the chain
  rule {\cite[Proposition~6.3]{vdDMM01}} shows that $(V^{\tmop{inv}})^{(i)}
  \circ V$ is the value of the $i$-th iterate of $\mathd$ at $x$, for all $i >
  0$. So $\exp (\mathd) (x) = f$. We see with \Cref{lem-partial-Taylor} that $\exp(\mathd)$ is the right composition with $ f$. This shows that $\mathcal{E}$ is surjective. Now $\exp \of
  \tmop{Der}^{\mathcal{S}} (\mathbb{S}) \longrightarrow 1 \text{-}
  \tmop{Aut}^{\mathcal{S}} (\mathbb{S})$ is a group morphism, and $\sigma
  \mapsto \sigma (x)$ is a group morphism $\left( 1 \text{-}
  \tmop{Aut}^{\mathcal{S}} (\mathbb{S}), \circ, \tmop{Id}_{\mathbb{S}} \right)
  \longrightarrow (\mathcal{P}, \check{\circ}, x)$, so the result follows.
\end{proof}

\begin{lemma}
  \label{lem-for-radius}For all $\delta, \varepsilon \in \mathbb{T}^{\times}$
  with $\delta, \varepsilon \prec x$ and $\delta \sim \varepsilon$ we have $1
  - x \delta^{\dag} \sim 1 - x \varepsilon^{\dag}$.
\end{lemma}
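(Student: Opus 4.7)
My plan is to reduce the statement to a general property of the $^{\dag}$ operation on infinitesimals, via the identity $1 - xf^{\dag} = -x(f/x)^{\dag}$. For any $f \in \mathbb{T}^{\times}$, a direct computation gives $(f/x)^{\dag} = f^{\dag} - x^{\dag} = f^{\dag} - 1/x$, whence the identity. Setting $g \assign \delta/x$ and $h \assign \varepsilon/x$, the hypothesis $\delta, \varepsilon \prec x$ makes $g, h$ nonzero infinitesimals, and $\delta \sim \varepsilon$ translates to $g \sim h$. After cancelling the common factor $-x$, the desired conclusion $1 - x\delta^{\dag} \sim 1 - x\varepsilon^{\dag}$ becomes $g^{\dag} \sim h^{\dag}$.

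The second step is to prove the general claim: for $g, h \in \mathbb{T}^{\times}$ with $g, h \prec 1$ and $g \sim h$, one has $g^{\dag} \sim h^{\dag}$. Write $h = g(1 + u)$ with $u \assign (h - g)/g \prec 1$; we may assume $u \neq 0$, else the conclusion is trivial. Then
\[ h^{\dag} - g^{\dag} = (1+u)^{\dag} = u'/(1+u), \]
and since $1 + u \sim 1$, it suffices to show $u' \prec g^{\dag}$. Expand $u' = (h - g)'/g - u \cdot g^{\dag}$. The second summand satisfies $u \cdot g^{\dag} \prec g^{\dag}$ because $u \prec 1$. For the first, both $h - g$ and $g$ are infinitesimal with $h - g \prec g$, so the H-asymptotic axiom $f \prec g \Leftrightarrow f' \prec g'$ (applicable for $f, g \prec 1$) gives $(h - g)' \prec g'$, whence $(h-g)'/g \prec g'/g = g^{\dag}$. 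Thus $u' \prec g^{\dag}$, and the claim follows.

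The only point requiring attention is the applicability of the H-asymptotic axiom, which demands $g$ and $h - g$ to be infinitesimal; both are immediate from $\delta, \varepsilon - \delta \prec x$ divided by $x$. I do not expect any serious obstacle: everything else is routine manipulation in an H-asymptotic field, and the reduction via $(f/x)^{\dag}$ is what makes the argument clean.
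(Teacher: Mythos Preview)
Your proof is correct. Both arguments split $\delta^{\dag} = \varepsilon^{\dag} + (1+u)^{\dag}$ with $u \prec 1$ and estimate the correction term $(1+u)^{\dag} \sim u'$, but the executions differ. The paper works directly with $1 - x\delta^{\dag}$ and invokes axiom~AC3 of asymptotic couples to get $u' \prec \varepsilon^{\dag}$, leaving the passage from ``$\prec x\varepsilon^{\dag}$'' to ``$\prec 1 - x\varepsilon^{\dag}$'' implicit. You instead first rewrite $1 - xf^{\dag} = -x(f/x)^{\dag}$, which reduces the statement to the clean general fact that $g^{\dag} \sim h^{\dag}$ whenever $g \sim h$ are nonzero infinitesimals; your target quantity $g^{\dag} = (\varepsilon/x)^{\dag}$ is then \emph{exactly} $(1 - x\varepsilon^{\dag})/(-x)$, so no further step is needed. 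Your route also uses only the H-asymptotic axiom $f \prec g \Leftrightarrow f' \prec g'$ for infinitesimals (rather than the asymptotic-couple formulation), and it sidesteps any worry about the case $\varepsilon \asymp 1$, since $g = \varepsilon/x$ is always genuinely infinitesimal.
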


\begin{proof}
  Write $\delta = \varepsilon + \iota$ where $\iota \prec \varepsilon$. So $1
  - x \delta^{\dag} = 1 - x \varepsilon^{\dag} - x \left( 1 + \iota /
  \varepsilon \right)^{\dag}$ Recall that $\left( \Gamma, +, 0, <,
  \mathord{\:^{\dag}} \right)$ is an asymptotic couple, so we have $\left( 1 +
  \iota / \varepsilon \right)^{\dag} \sim \left( \iota / \varepsilon \right)'
  \prec \varepsilon^{\dag}$ by {\cite[axiom AC3, p 273]{vdH:mt}}, hence the
  result.
\end{proof}

\begin{theorem}
  \label{th-transseries}Two series $x + \delta, x + \varepsilon \in
  \mathcal{P} \setminus \{ x \}$ are conjugate in $\mathcal{P}$ if and only if
  $\varepsilon - \delta \prec \delta (1 - x \delta^{\dag})$.
\end{theorem}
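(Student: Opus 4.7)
The plan is to invoke \Cref{th-main} through the group isomorphism $\mathcal{E}$ of \Cref{th-corres-trans}, and then translate the resulting condition on $f, g \in \tmop{Cont}(\partial)$ back to one on the corresponding $\delta, \varepsilon$. Let $f, g \in \tmop{Cont}(\partial) \setminus \{0\}$ be the unique elements with $\mathcal{E}(f) = x + \delta$ and $\mathcal{E}(g) = x + \varepsilon$. By \Cref{lem-semi-regular} and \Cref{th-main}, $f$ and $g$ are conjugate in $\tmop{Cont}(\partial)$ iff $f \sim g$ and $v\!\left(\frac{f - g}{g^2}\right) > (-v(g) - \Psi)'$. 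In $\mathbb{S}$, the proof of \Cref{lem-transseries-contractive-hull} shows $\min \Psi = -v(x)$, realised by the monomial $x$; combined with the strict monotonicity of $\mathord{'}$ (\Cref{lem-monotone}), the set $(-v(g) - \Psi)'$ attains its maximum at $v((x/g)')$. The condition thus simplifies to $f - g \prec g^2 (x/g)' = g - xg'$.

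To translate back to $\delta, \varepsilon$, set $\Phi(h) := \mathcal{E}(h) - x - h = \sum_{n \geq 2} \frac{1}{n!}(h\partial)^n(x)$, so that $\delta = f + \Phi(f)$ and $\varepsilon = g + \Phi(g)$. Because $h\partial$ is contracting, $\Phi(h) \preccurlyeq h h' \prec h$, hence $\delta \sim f$ and $\varepsilon \sim g$, so $f \sim g \Leftrightarrow \delta \sim \varepsilon$. Assuming this, a telescoping based on the identity $A_{n+1} - B_{n+1} = f (A_n - B_n)' + (f - g) B_n'$ (with $A_n := (f\partial)^n(x)$ and $B_n := (g\partial)^n(x)$), exploiting contracting of $f \partial$ and the fact that $B_n' \prec 1$ for $n \geq 1$, yields $\Phi(f) - \Phi(g) \prec f - g$, hence $\delta - \varepsilon \sim f - g$. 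Combined with \Cref{lem-for-radius} applied to $(f, g)$, which gives $v(f - x f') = v(g - x g')$, the remaining step is to prove $v(\delta - x \delta') = v(f - x f')$, i.e., $\Phi(f) - x \Phi(f)' \prec f - x f'$.

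This last estimate is the main technical obstacle. When $x f' \prec f$ one has $f - x f' \sim f$, and since $\Phi(f), x \Phi(f)' \prec f$ the inequality is immediate. The delicate case is $x f' \asymp f$: cancellation pushes $v(f - x f')$ strictly above $v(f)$, and $\Phi(f)$ can become comparable to $f - x f'$. Setting $T(h) := h - x h'$, a direct calculation gives the commutation relation
\[ T((f\partial)(h)) = (f\partial)(T(h)) + h' T(f). \]
Applied inductively to each $(f\partial)^n(x)$ for $n \geq 2$: by induction $T((f\partial)^{n-1}(x)) \preccurlyeq T(f)$, so the first summand is $\prec T(f)$ by contracting, while the second summand is $\prec T(f)$ since $((f\partial)^{n-1}(x))' \prec 1$ for $n \geq 2$. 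Summing over $n \geq 2$ gives $T(\Phi(f)) \prec T(f)$, completing the argument.
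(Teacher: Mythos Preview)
Your approach matches the paper's: transfer via \Cref{th-corres-trans}, apply \Cref{th-main} (using $\min\Psi = v(x^{-1})$), and translate the resulting condition back to $\delta,\varepsilon$. Your telescoping argument for $\delta-\varepsilon \sim f-g$ makes explicit a step the paper leaves to the reader; conversely, your last paragraph is unnecessary work, since \Cref{lem-for-radius} applied to the pair $(\delta,f)$ (both $\prec x$, with $\delta\sim f$) already gives $1-x\delta^{\dag}\sim 1-xf^{\dag}$ and hence $v(\delta-x\delta')=v(f-xf')$ directly, without the $T$-operator commutation computation.
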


\begin{proof}
  Note that $\exp (f \partial) (x) - x \sim f$ for all $f \in \tmop{Cont}
  (\partial)$. Recall that $v (x^{- 1}) = \min \Psi$. By
  \Cref{lem-semi-regular}, the direct limit $\mathbb{S}$ has regular asymptotic integration, so Theorem~\ref{th-main} applies. Now by Theorem~\ref{th-corres-trans}
  we can translate the conditions of conjugacy in Theorem~\ref{th-main}, as the conjunction $\varepsilon \sim \delta$ and
  $v \left( \frac{\varepsilon - \delta}{\delta^2} \right) > v \left( \left(
  \frac{x}{\delta} \right)' \right)$. Since $1 - \delta x^{- 1} \preccurlyeq
  1$, the inequality $\varepsilon - \delta \prec \delta (1 - x \delta^{\dag})$
  entails that~$\varepsilon \sim \delta$. We conclude with
  \Cref{lem-for-radius}.
\end{proof}

\begin{remark}
  \label{rem-precise}For $\delta \in \mathbb{S}$ with $\delta \prec x$, since
  $\delta^{\dag} \preccurlyeq x^{- 1}$, we have $1 - x \delta^{\dag}
  \preccurlyeq 1$. Furthermore, we have $1 - x \delta^{\dag} \prec 1$ if and
  only if $\delta^{\dag} \sim x^{- 1}$, i.e. if and only if $\delta = xh$ for
  an $h \in \mathbb{S}$ with $h^{\dag} \prec x^{- 1}$. In all other cases, the
  series $x + \delta$ and $x + \varepsilon$ are conjugate if and only if
  $\varepsilon \sim \delta$.
\end{remark}

\begin{definition}
  \label{def-subsystem}Let $\mathcal{U}= (\Lambda_d)_{d \in D}$ be a direct
  system of non-trivial ordered Abelian groups such that each $\Lambda_d$ for
  $d \in D$ is a subgroup of $\Gamma_d$ and that the morphisms $\Lambda_{d_0}
  \longrightarrow \Lambda_{d_1}$ for $d_0 \leqslant d_1$ are restrictions of
  the morphisms $\Gamma_{d_0} \longrightarrow \Gamma_{d_1}$. Set $\mathbb{U}
  \assign \underset{\longrightarrow}{\lim} _{d \in D} C \left( \!
  (\Lambda_d) \! \right)$ and $\mathcal{P}_{\mathcal{U}} \assign \{ x +
  \delta \suchthat \delta \in \mathbb{U} \wedge \delta \prec x \}$. We say
  that $\mathcal{U}$ is a {\tmstrong{{\tmem{subsystem}}}} of $\mathcal{S}$ if
  \begin{enumeratealpha}
    \item $\partial$ is
    $\mathcal{U}$-regular, and
    
    \item \label{def-subsystem-b}$\mathcal{P}_{\mathcal{U}}$ is closed under
    composition.
  \end{enumeratealpha}
\end{definition}

For instance, the direct system $(\mathfrak{M}_{0, n})_{n \in \mathbb{N}}$
corresponding to the field $\mathbb{T}_{\log}$ of logarithmic transseries, is a subsystem of $\mathcal{S}$.

\begin{proposition}
  \label{prop-subsystems}Let $\mathcal{U} \subseteq \mathcal{S}$ be a
  subsystem and let $\mathbb{U} \subseteq \mathbb{S}$ denote the corresponding
  direct limit. Then $\exp (\tmop{Cont} (\partial) \cap \mathbb{U}) (x)
  =\mathcal{P}_{\mathcal{U}}$.
\end{proposition}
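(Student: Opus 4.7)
The plan is to prove both inclusions of the claimed set equality separately, leveraging the correspondence $\mathcal{E}$ from \Cref{th-corres-trans} together with the structural properties of subsystems.

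For the inclusion $\mathcal{E}(\tmop{Cont}(\partial) \cap \mathbb{U}) \subseteq \mathcal{P}_{\mathcal{U}}$, I will take $f \in \tmop{Cont}(\partial) \cap \mathbb{U}$ and fix $d_0 \in D$ with $f \in C \left( \!\! (\Lambda_{d_0}) \!\! \right)$. Condition~(a) of \Cref{def-subsystem} makes $f\partial$ into a contracting strongly linear map on $C \left( \!\! (\Lambda_{d_0}) \!\! \right)$, so each iterate $(f\partial)^n(x)$ for $n \geq 1$ lies there and the family $\left( (f\partial)^n(x)/n! \right)_{n \geq 1}$ is summable in that Hahn subfield. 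Summing gives $\mathcal{E}(f) - x \in C \left( \!\! (\Lambda_{d_0}) \!\! \right) \subseteq \mathbb{U}$, while \Cref{lem-transseries-contractive-hull} gives $\mathcal{E}(f) - x \sim f \prec x$, so $\mathcal{E}(f) \in \mathcal{P}_{\mathcal{U}}$.

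For the reverse inclusion, I will start with $x + \delta \in \mathcal{P}_{\mathcal{U}}$, fix $d_\delta \in D$ with $\delta \in C \left( \!\! (\Lambda_{d_\delta}) \!\! \right)$, and apply \Cref{th-corres-trans} to obtain the unique $f \in \tmop{Cont}(\partial)$ with $\mathcal{E}(f) = x + \delta$; the remaining task is to show that $f$ lies in $\mathbb{U}$. Writing $\sigma \assign \exp(f\partial)$, which by \Cref{th-corres-trans} is the right composition with $x + \delta$, and $\phi \assign \sigma - \tmop{Id}_{\mathbb{S}}$, inverting the exponential correspondence gives $f = f\partial(x) = \sum_{n \geq 1} \frac{(-1)^{n+1}}{n} \phi^n(x)$. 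The Taylor expansion $\sigma(g) = \sum_n g^{(n)} \delta^n / n!$ for $g \in \mathbb{S}$, valid since $\delta \prec x$, combined with condition~(a), shows that $\sigma$ preserves $C \left( \!\! (\Lambda_{d_\delta}) \!\! \right)$; so does $\phi$. Since $\phi(x) = \delta$ lies in this subfield, so do all the iterates $\phi^n(x)$ for $n \geq 1$, and the contracting nature of $\phi$ provides summability there, giving $f \in C \left( \!\! (\Lambda_{d_\delta}) \!\! \right) \subseteq \mathbb{U}$.

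The main obstacle is not producing $f$ itself, which \Cref{th-corres-trans} already hands us in $\mathbb{S}$, but ensuring summability of $\sum_{n \geq 1} \frac{(-1)^{n+1}}{n} \phi^n(x)$ within $\mathbb{U}$: a family of elements of $\mathbb{U}$ summable in $\mathbb{S}$ need not collectively lie in any single $C \left( \!\! (\Lambda_{d_0}) \!\! \right)$. The Taylor argument, underpinned by condition~(a), is the device that confines every iterate to the fixed Hahn subfield determined by $\delta$. Condition~(b) supplies a complementary softer route: for any $g \in \mathbb{U}$ with $g \prec x$, the rewriting $(x + g) \circ (x + \delta) = x + \delta + \sigma(g)$ and closure of $\mathcal{P}_{\mathcal{U}}$ under composition force $\sigma(g) \in \mathbb{U}$, and an induction then shows $\phi^n(x) \in \mathbb{U}$ for all $n \geq 1$; the regularity condition~(a) is still needed to localise the family into a single level of the direct system before summing.
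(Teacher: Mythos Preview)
Your forward inclusion matches the paper's one-line argument via $\mathcal{U}$-regularity of $\partial$. For the reverse inclusion the two approaches diverge. The paper argues by contradiction: writing $x+\delta = \exp(h\partial)(x)$ and assuming $h \notin \mathbb{U}$, it takes the least $\gamma_0 \in \tmop{supp} h \setminus \Lambda_d$, truncates $h = \varphi + \psi$ with $\varphi \in \mathbb{U}_d$ and $\psi$ led by the offending monomial $\mathfrak{m}_0$, and forms $\varepsilon \assign h \ast (-\varphi)$; by \Cref{lem-Lie-cont} one has $\varepsilon \sim h(\gamma_0)\mathfrak{m}_0$, so $\exp(\varepsilon\partial)(x) - x \sim h(\gamma_0)\mathfrak{m}_0 \notin \mathbb{U}$, yet $\exp(\varepsilon\partial)(x) = \exp(-\varphi\partial)(x) \circ (x+\delta) \in \mathcal{P}_{\mathcal{U}}$ by the forward inclusion and condition~(b). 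This stays entirely inside the paper's BCH machinery and uses condition~(b) essentially.

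Your direct route via $f = \sum_{n\geq 1} \tfrac{(-1)^{n+1}}{n}\phi^n(x)$ is a legitimate alternative, and if it goes through it actually shows condition~(b) to be redundant here, since $\mathcal{P}_{\mathcal{U}}$ would then be the $\mathcal{E}$-image of the $\ast$-subgroup $\tmop{Cont}(\partial)\cap\mathbb{U}$. The soft spot is the Taylor identity $g \circ (x+\delta) = \sum_n g^{(n)}\delta^n/n!$: it is not proved in the paper (only the constant-shift case appears, in \Cref{lem-partial-Taylor}) and it is not a formal consequence of $\sigma = \exp(f\partial)$, since $(f\partial)^n(g) \neq f^n g^{(n)}$ in general. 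The identity does hold for flat transseries---flatness $\mathfrak{m}^{\dag} \preccurlyeq x^{-1}$ is exactly what makes the family $(g^{(n)}\delta^n)_n$ summable when $\delta \prec x$---but you should cite it (e.g.\ from the composition sections of \cite{vdDMM01} or \cite{vdH:ln}) or sketch the summability directly. With that in hand, your descent from $C \left( \!\! (\Gamma_{d_\delta}) \!\! \right)$ to $C \left( \!\! (\Lambda_{d_\delta}) \!\! \right)$ and the remainder of the argument are correct. As you yourself note, the condition-(b) route in your last paragraph yields only $\phi^n(x) \in \mathbb{U}$ without localisation to a fixed level, so it cannot replace the Taylor step.
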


\begin{proof}
  For $f \in \tmop{Cont} (\partial) \cap \mathbb{U}$, the series $\exp (f
  \partial) (x) = x + f + \frac{1}{2} ff' + \cdots$ lies in $\mathbb{U}$ by
  $\mathcal{U}$-regularity of $\partial$. Conversely, let $\delta \in
  \mathbb{U}$ with $\delta \prec x$, and let $h \in \mathbb{S}$ with $x +
  \delta = \exp (h \partial) (x)$. Assume for contradiction that $h \nin
  \mathbb{U}$. Recall that $\tmop{supp} h$ is a well-ordered subset of
  $\Gamma_d$ for a $d \in D$. So there is a least element $\gamma_0 \in
  \tmop{supp} h \setminus \Lambda_d$. Write $\mathfrak{m}_0$ for the
  corresponding element of $\mathfrak{M}_d$. Let $\varphi$ denote the element
  of $\mathbb{S}_d$ with $\tmop{supp} \varphi \assign \{ \gamma \in
  \tmop{supp} h \suchthat \gamma < \gamma_0 \}$. So $\varphi \in \mathbb{U}_d$
  and $h = \varphi + \psi$ where $\psi \sim h (\gamma_0) \mathfrak{m}_0$. By
  \Cref{lem-Lie-cont}, the series $\varepsilon \assign h \ast (- \varphi) =
  \psi + \frac{1}{2}  \llbracket \psi, - \varphi \rrbracket + \cdots$
  satisfies $\varepsilon \sim h (\gamma_0) \mathfrak{m}_0$, so $\exp
  (\varepsilon \partial) (x) - x \sim h (\gamma_0) \mathfrak{m}_0$. In
  particular, we have $\exp (\varepsilon \partial) (x) \nin \mathbb{U}$. But
  $\exp (\varepsilon \partial) (x) = \exp (- \varphi \partial) (x) \circ \exp
  (h \partial) (x) \in \mathcal{P}_{\mathcal{U}}$ by
  \Cref{def-subsystem}(\ref{def-subsystem-b}): a contradiction.
\end{proof}

\begin{corollary}
  \label{cor-log-transseries}For all $n \in \mathbb{N}$, two series $x +
  \delta, x + \varepsilon \in \mathcal{P} \cap \mathbb{T}_{0, n} \setminus \{
  x \}$ are conjugate in $\mathcal{P} \cap \mathbb{T}_{0, n + 1}$ if and only
  if $\varepsilon - \delta \prec \delta (1 - x \delta^{\dag})$. In particular,
  two series $x + \delta, x + \varepsilon \in \mathcal{P} \cap
  \mathbb{T}_{\log} \setminus \{ x \}$ are conjugate in $\mathcal{P} \cap
  \mathbb{T}_{\log}$ if and only if $\varepsilon - \delta \prec \delta (1 - x
  \delta^{\dag})$.
\end{corollary}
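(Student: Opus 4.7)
The plan is to specialise the framework from Section~\ref{section-conjugacy} to the subsystem $\mathcal{U} = (\mathfrak{M}_{0, n})_{n \in \mathbb{N}}$ of $\mathcal{S}$, whose direct limit is $\mathbb{T}_{\log}$, as pointed out immediately before Proposition~\ref{prop-subsystems}. First I would check that $(\mathbb{T}_{\log}, \preccurlyeq, \partial)$, viewed as an H-asymptotic field in its own right, satisfies all the standing hypotheses of Section~2 and in particular has regular asymptotic integration. The verification is a verbatim transcription of the proof of Lemma~\ref{lem-semi-regular}: the pseudo-gap of $\mathbb{T}_{0, n}$ is $\gamma_n \assign v((\log_{n + 1} x)')$, and for any $f \in \mathbb{T}_{0, n}$ either $v(f) \neq \gamma_n$ and $f$ admits an asymptotic integral already inside $\mathbb{T}_{0, n}$, or $v(f) = \gamma_n$ and $c \log_{n+1} x \in \mathbb{T}_{0, n+1}$ is an asymptotic integral with derivative still in $\mathbb{T}_{0, n}$. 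Hence Definition~\ref{def-semi-regular} holds with $d_1 = n + 1$ witnessing regularity for $d = n$, and $\mathbb{T}_{\log}$ has no pseudo-gap.

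Next I would replay the argument of Theorem~\ref{th-transseries} inside $\mathbb{T}_{\log}$. By Proposition~\ref{prop-subsystems}, the map $\mathcal{E}$ restricts to a group isomorphism between $\tmop{Cont}(\partial) \cap \mathbb{T}_{\log}$ and $\mathcal{P}_{\mathcal{U}} = \mathcal{P} \cap \mathbb{T}_{\log}$, sending $f$ to $\exp(f \partial)(x) \sim x + f$. Applying Theorem~\ref{th-main} to $(\mathbb{T}_{\log}, \preccurlyeq, \partial)$ and translating the resulting valuation-theoretic criterion through Lemma~\ref{lem-for-radius} exactly as in the proof of Theorem~\ref{th-transseries} yields: $x + \delta$ and $x + \varepsilon$ in $\mathcal{P} \cap \mathbb{T}_{\log} \setminus \{x\}$ are conjugate in $\mathcal{P} \cap \mathbb{T}_{\log}$ if and only if $\varepsilon - \delta \prec \delta(1 - x \delta^{\dag})$. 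This is the ``in particular'' clause of the corollary.

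For the finer statement about the levels $n$ and $n + 1$, I would trace through the construction of Lemma~\ref{lem-exact} with $d_0 \assign n$ and $d \assign n + 1$ applied to the elements $f \assign \mathcal{E}^{-1}(x + \delta)$ and $g \assign \mathcal{E}^{-1}(x + \varepsilon)$, which both lie in $\mathbb{T}_{0, n}$ by (the already-proved) Proposition~\ref{prop-subsystems} applied to the subsystem cut off at level $n$. At each step the term $\tau_\eta$ produced is of the form $\tmop{lead}(g) \tau$ with $\tau \in \mathbb{T}_{0, n+1}$ coming from the regularity witness above, so the transfinite sum $\varphi_\lambda$ stays in $\mathbb{T}_{0, n+1}$; applying $\mathcal{E}$ then lands in $\mathcal{P} \cap \mathbb{T}_{0, n+1}$ since $\partial$ preserves $\mathbb{T}_{0, n+1}$ by regularity. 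The converse direction is immediate: any conjugator witnessing conjugacy inside the larger $\mathcal{P}$ already forces the criterion by Theorem~\ref{th-transseries}, and that criterion mentions only $\delta$ and $\varepsilon$.

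The main obstacle is level bookkeeping: one must check that a single increment from $n$ to $n + 1$ suffices for the entire, possibly transfinite, construction of $\varphi_\lambda$. This works because the pseudo-gap witness furnished by Lemma~\ref{lem-semi-regular} is always the canonical $\log_{n+1} x \in \mathbb{T}_{0, n+1}$, so the level $n + 1$ is never exceeded no matter which right-hand side $\frac{f - \varphi_\eta \ast g \ast (-\varphi_\eta)}{g^2}$ we integrate at a successor stage, and limit stages only sum previously constructed terms in $\mathbb{T}_{0, n+1}$.
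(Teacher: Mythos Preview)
Your proposal is correct and follows essentially the same route as the paper. The paper's proof is extremely terse---it just says that the arguments of \Cref{lem-semi-regular} give regular asymptotic integration for $\mathbb{T}_{\log}$, and then invokes \Cref{prop-subsystems} together with \Cref{prop-with-Poincare} ``as in \Cref{th-transseries}''; your write-up unpacks exactly these steps, in particular making explicit the level bookkeeping (that the witness $d_1=n+1$ from regular asymptotic integration suffices throughout the transfinite construction of \Cref{lem-exact}), which the paper leaves implicit. The only cosmetic difference is that you cite \Cref{th-main} where the paper cites \Cref{prop-with-Poincare}, but since $\mathbb{T}_{\log}$ has no pseudo-gap these coincide.
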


\begin{proof}
  Recall by \Cref{lem-semi-regular} that
  $\mathbb{T}_{\log}$ has regular asymptotic integration. The pseudo-gap $v ((\log_{n +
  1} x)')$ of $\mathbb{T}_{0, n + 1}$ is strictly below the set $v
  (\mathbb{T}_{0, n})$. So we may apply \Cref{prop-with-Poincar�} in $\tmop{Cont}(\partial \restriction \mathbb{T}_{0,n+1})$. Then, as in the proof of \Cref{th-transseries}, we translate the inequalities using the isomorphism from \Cref{prop-subsystems}.
\end{proof}

\begin{example}
  \label{ex-grid-based}Another example of subsystem of $\mathcal{S}$ is the
  direct system corresponding to flat grid-based transseries
  {\cite{Ec92,vdH:ln,Edgar:begin}}. The valued differential field of flat
  grid-based transseries has regular asymptotic integration, so one obtains
  the same conditions for conjugacy of parabolic flat grid-based transseries.
\end{example}

\begin{example}
  \label{ex-Puiseux}Consider the subsystem $\left( \frac{1}{n!} \mathbb{Z}
  \right)_{m, n \in \mathbb{N}}$ of $\mathcal{S}$. This corresponds to formal
  Puiseux series over $\mathbb{R}$. Here we have a pseudo-gap $v(x^{-1})$, so we only obtain
  the sufficient but non-necessary conditions as in
  \Cref{th-k-powered}.
\end{example}

\begin{remark}
    Our results up to and excluding \Cref{th-corres-trans} apply when replacing $\mathbb{T}$ with the larger, class-sized field of $\omega$-series \cite[Section~4.5]{BM19}. This is an totally ordered direct limit of (set-sized) fiels of generalised series indexed by all ordinals. Although it has a pseudo-gap, asymptotic integration is regular with respect to this directed system.
\end{remark}

\subsection{Formal power series with exponents in a field}

Let $C$ be an ordered field and let $\Gamma$ denote its underlying ordered
additive group. Then the field $\mathbb{K}= C \left( \! (\Gamma) \!
\right)$ is endowed with a standard derivation $\partial$ given by $\partial
(f) (c) = (c + 1) f (c + 1)$ for all $(f, c) \in \mathbb{K} \times C$. We
write each element $f$ of $\mathbb{K}$ as a formal series $f = \sum_{c \in C}
f (c) x^c$, so $\partial (f) = \sum_{c \in C} cf (c) x^{c - 1}$, and $\partial
(x) = 1$. Note that $\tmop{Cont} (\partial) = \{ f \in \mathbb{K} \suchthat f
\prec x \}$ and that the H-asymptotic field $(\mathbb{K}, \preccurlyeq,
\partial)$ is grounded in the sense of {\cite[p 326]{vdH:mt}}, i.e. the set
$\Psi$ has a maximum $v (x^{- 1})$, which is thus the pseudo-gap of
$(\mathbb{K}, \preccurlyeq, \partial)$.

\ We showed {\cite[Proposition~6.6]{Bag:mg}} that $\exp (\tmop{Cont}
(\partial)) (x)$ is the group
\[ \mathcal{P}_C \assign \{ x + \delta \suchthat \delta \in \mathbb{K} \wedge
   \delta \prec x \} \]
of parabolic series in $\mathbb{K}$ under the formal composition law of
{\cite[Section~6.1]{Bag:mg}}. In view of \Cref{prop-with-Poincar�}, the same
arguments as in the proof of \Cref{th-transseries} entail that for all
$\delta, \varepsilon \in \mathbb{K}$ with $\delta, \varepsilon \prec x$ and
$\varepsilon \sim \delta$, the series $x + \delta$ and $x + \varepsilon$ are
conjugate in $\mathcal{P}_C$ if $\varepsilon - \delta \prec \delta (1 - x
\delta^{\dag})$ and $v \left( \frac{\varepsilon - \delta}{\delta^2} \right)$
lies above the pseudo-gap of $\mathbb{K}$, i.e. if $v \left( \frac{\varepsilon
- \delta}{\delta^2} \right) + v (x^{- 1}) > 0$. This translates to
$\varepsilon - \delta \prec \delta (1 - x \delta^{\dag}), x \delta^2$. If
$\delta \prec x^{- 1}$, then as in \Cref{rem-precise}, we have $v (1 - x
\delta^{\dag}) = 0$, so $\varepsilon - \delta \prec \delta (1 - x
\delta^{\dag})$ holds because $\varepsilon \sim \delta$. If $\delta
\succcurlyeq x^{- 1}$, then $x \delta^2 \succcurlyeq \delta$ so $\varepsilon -
\delta \prec x \delta^2$ holds because $\varepsilon \sim \delta$. Therefore:

\begin{theorem}
  \label{th-k-powered}For all $\delta, \varepsilon \in \mathbb{K}$ with
  $\delta, \varepsilon \prec x$ with $\varepsilon \sim \delta$, the series $x
  + \delta$ and $x + \varepsilon$ are conjugate in $\mathcal{P}_C$ if $\delta
  \succcurlyeq x^{- 1}$ and $\varepsilon - \delta \prec \delta (1 - x
  \delta^{\dag})$ or $\delta \prec x^{- 1}$ and $\varepsilon - \delta \prec x
  \delta^2$.
\end{theorem}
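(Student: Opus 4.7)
The plan is to reduce directly to the sufficient condition already established in the paragraph preceding the theorem: combining \Cref{prop-with-Poincare} with the identification $\exp(\tmop{Cont}(\partial))(x) = \mathcal{P}_C$ from \cite[Proposition~6.6]{Bag:mg}, whenever $\delta, \varepsilon \in \mathbb{K}$ satisfy $\delta, \varepsilon \prec x$, $\varepsilon \sim \delta$, together with both $\varepsilon - \delta \prec \delta(1 - x\delta^\dag)$ and $\varepsilon - \delta \prec x\delta^2$, the series $x + \delta$ and $x + \varepsilon$ are conjugate in $\mathcal{P}_C$. My task is therefore to verify, in each of the two regimes singled out by the theorem, that the inequality not explicitly hypothesised follows automatically from $\varepsilon \sim \delta$.

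In the regime $\delta \succcurlyeq x^{-1}$, the dominance relation rewrites as $v(x) + v(\delta) \leq 0$, whence
\[ v(x \delta^2) \;=\; v(\delta) + (v(x) + v(\delta)) \;\leq\; v(\delta), \]
i.e.\ $x\delta^2 \succcurlyeq \delta$. From $\varepsilon \sim \delta$ one then reads off $\varepsilon - \delta \prec \delta \preccurlyeq x\delta^2$, so the condition $\varepsilon - \delta \prec x\delta^2$ is automatic; combined with the hypothesised $\varepsilon - \delta \prec \delta(1 - x\delta^\dag)$, the sufficient condition yields the conjugacy.

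In the regime $\delta \prec x^{-1}$, I would first establish the $\mathbb{K}$-analogue of \Cref{rem-precise}: writing the leading term of $\delta$ as $a_{c_0} x^{c_0}$ with $c_0 = v(\delta)$, a short computation shows $\delta^\dag \sim c_0 x^{-1}$ and hence $x\delta^\dag \sim c_0 \in C$; the strict inequality $v(\delta) > v(x^{-1})$ precludes $x\delta^\dag \sim 1$, so $v(1 - x\delta^\dag) = 0$ and $\delta(1 - x\delta^\dag) \asymp \delta$. Then $\varepsilon \sim \delta$ forces $\varepsilon - \delta \prec \delta \asymp \delta(1 - x\delta^\dag)$, making the condition $\varepsilon - \delta \prec \delta(1 - x\delta^\dag)$ automatic; together with the hypothesised $\varepsilon - \delta \prec x\delta^2$, the sufficient condition applies.

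The main delicate step is the $\mathbb{K}$-analogue of \Cref{rem-precise}, which rests on identifying the leading coefficient of $x\delta^\dag$ with the leading exponent of $\delta$ (an identity specific to the ``pure power'' nature of $\mathbb{K}$, where every infinitesimal $\delta$ has $\delta^\dag$ asymptotic to a scalar multiple of $x^{-1}$). Once this identity is in hand the argument is merely a two-case packaging of the discussion preceding the theorem.
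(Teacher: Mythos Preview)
Your proposal is correct and follows essentially the same approach as the paper: the argument in the paragraph immediately preceding the theorem statement establishes the joint sufficient condition $\varepsilon - \delta \prec \delta(1 - x\delta^{\dag})$ and $\varepsilon - \delta \prec x\delta^2$, and then shows (as you do) that in each regime one of these is automatic from $\varepsilon \sim \delta$. One small notational caveat: in the paper's convention the leading exponent of $\delta$ is $-v(\delta)$ rather than $v(\delta)$, but this does not affect your reasoning, since the only fact used is that the leading exponent differs from $1$.
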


\subsection{A case of resonance}\label{subsection-resonnance}

We conclude by giving a simple example of resonance. Consider the field
$\mathbb{K}$ above for a given ordered field $C$. For all $c < 0$ in $C$ and
$\delta \in \mathbb{K}$ with $v (\delta) = v (x^c)$, we write $f_{\delta}
\assign x + 1 + \delta$, and set $f_0 \assign x + 1$. The pseudo-gap of
$\mathbb{K}$ is $v (x^{- 1})$. Thus for $\delta \in \mathbb{K}$ with $v
(\delta) = v (x^{- 1})$, the series $f_0$ and $f_{\delta}$ are not
asymptotically conjugate, whence not conjugate. However, for $c \in (- 1, 0)$
in $C$, and $\varepsilon \assign x^c$, the series $f_0$ and $f_{\varepsilon}$
are asymptotically conjugate, and the approximative conjugacy method of
\Cref{lem-approximate-conjugacy} (translated via the exponential map
$\mathcal{E} \of \tmop{Cont} (\partial) \longrightarrow \mathcal{P}_C$) gives
an asymptotic conjugating element $\varphi \assign x + \frac{1}{c + 1} x^{c +
1}$. Using formal Taylor expansions (see {\cite[Section~6.1]{Bag:mg}}) of up
to order $2$, one obtains
\[ \varphi \circ f_{\varepsilon} \circ \varphi^{\tmop{inv}} = \varphi \circ
   \left( \varphi^{\tmop{inv}} + 1 + x^c - \frac{c}{c + 1} x^{2 c} + \cdots
   \right) = f_{\iota} \]
for a $\iota \sim \frac{1}{c + 1} x^{2 c}$. If $c \in \left( - 1, - 1 / 2
\right)$, then $v (x^{2 c}) = v (f_0 - f_{\iota}) = v \left(
\frac{f_{\varepsilon} - x - (f_{\iota} - x)}{(f_{\varepsilon} - x)^2} \right)$
lies above the pseudo-gap of $\mathbb{K}$, so $f_0$ and $f_{\iota}$ are
conjugate, whence $f_0$ and $f_{\varepsilon}$ are conjugate. In particular, in
contrast with \Cref{cor-initial}, the set of series $\delta \prec 1$ for which
$x + 1$ and $x + 1 + \delta$ are conjugate is not downward closed for
$\preccurlyeq$. It is known that 

\begin{acknowledgments*}
  We thank Daniel Panazzolo and Jean-Philippe Rolin for their answers to our
  questions.
\end{acknowledgments*}

\end{document}